\numberwithin{equation}{section} \numberwithin{equation}{section}
\newtheorem{thm}{Theorem}[section]
\newtheorem{lem}{Lemma}[section]
\newtheorem{prop}{Proposition}[section]
\newtheorem*{CZ}{Theorem (Chen and Zeng's Universal Inequality)}
\newtheorem{rem}{Remark}[section]
\newtheorem*{ack}{Acknowledgment}
\newtheorem*{problem}{Problem}
\newcommand{\DOI}[1]{doi: \href{https://doi.org/#1}{#1}}
\renewcommand{\oddsidemargin}{5mm}
\title[Universal Bounds for Fractional Laplacian]{Universal Bounds for Fractional Laplacian\\ on a Bounded Open Domain in $\mathbb{R}^{n}$}
\author[L. Zeng ]{Lingzhong Zeng }
\address{Lingzhong Zeng
\\  \newline \indent Jiangxi Provincial Center for Applied Mathematics$^{1}$
\\  \newline \indent   Jiangxi Normal University, Nanchang 330022,  China.
\\  \newline \indent  School of Mathematics and Statistics$^{2}$
\\  \newline \indent   Jiangxi Normal University, Nanchang 330022,  China.
\\  \newline \indent lingzhongzeng@yeah.net }
\begin{document}
\maketitle

\begin{abstract}   Let $\Omega$ be a bounded open domain on the Euclidean space $\mathbb{R}^{n}$ and $\mathbb{Q}_{+}$ be the set of all positive rational numbers. In 2017, Chen and Zeng investigated  the eigenvalues with higher order of the fractional Laplacian $\left.(-\Delta)^{s}\right|_{\Omega}$ for $s>0$ and $s \in \mathbb{Q}_{+}$,  and they obtained a universal  inequality of Yang type(\emph{ Universal inequality and upper bounds of eigenvalues
for non-integer poly-Laplacian on a bounded domain,  Calculus of Variations and Partial Differential Equations,
 (2017) \textbf{56}:131}).  In the
spirit of Chen and Zeng's work, we study the eigenvalues of fractional Laplacian, and establish an inequality of eigenvalues with lower order  under the same condition. Also, our eigenvalue inequality is  universal and  generalizes the eigenvalue inequality for the poly-harmonic operators given by  Jost et al.(\emph{Universal bounds for eigenvalues of polyharmonic operator.   Trans.  Amer. Math.  Soc. {\bf 363}(4), 1821-1854 (2011)}).
\end{abstract}

\footnotetext{{\it Key words and phrases}:  fractional Laplacian; eigenvalues with lower order; universal, Euclidean space.} \footnotetext{2020
\textit{Mathematics Subject Classification}:
 35P15, 53C23, 81T30, 83C57.}

\footnotetext{The author  partially supported by the  National Natural Science Foundation of China (Grant Nos. 11861036 and 11826213).}

\section{Introduction}
Let $\Omega$ be a bounded domain with piecewise smooth boundary
$\partial\Omega$ on an $n$-dimensional Euclidean space
$\mathbb{R}^{n}$. The Dirichlet eigenvalue problem of the poly-Laplacian with
any
order is described by \begin{equation}\label{poly-Laplace-problem}\begin{cases}(-\Delta)^{l}u = \lambda u,\ \ & {\rm in}\ \ \Omega,\\[2mm]
u=\dfrac{\partial u}{\partial\nu}=\cdots=\dfrac{\partial^{l-1}
u}{\partial\nu^{l-1}}=0,\ \ & {\rm on}\ \ \partial\Omega,\end{cases}\end{equation}
where $\Delta$ is the Laplacian and $\nu$ denotes the outward unit
normal vector field of $\partial\Omega$. It is well known that the spectrum of the eigenvalue problem \eqref{poly-Laplace-problem} is
discrete and satisfies
\begin{equation*}
0<\lambda_{1}\leq\lambda_{2}\leq\cdots\leq\lambda_{k}\leq\cdots\rightarrow+\infty,
\end{equation*}
where  each eigenvalue is repeated according to its multiplicity  and $\lambda_{k}$ denotes the $k^{th}$ eigenvalue.  When $l=1$, eigenvalue problem \eqref{poly-Laplace-problem} is
called a fixed membrane problem. For this case, in 1956, Payne et al. \cite{PPW} (Thompson-Colin \cite{T} (for all $n \geq 3$)) proved a universal inequality as follows:

\begin{equation}\label{ppw-ineq}\lambda_{k+1}-\lambda_{k}\leq\frac{4}{nk}\sum^{k}_{i=1}\lambda_{i}.\end{equation}
Furthermore, in various settings, many mathematicians extended the above universal inequality.  In particular, Hile and Protter \cite{HP} proved the following universal inequality of eigenvalues:

\begin{equation}\label{hp-ineq}\sum^{k}_{i=1}\frac{\lambda_{i}}{\lambda_{k+1}-\lambda_{i}}\geq\frac{nk}{4},\end{equation}
which is sharper than inequality \eqref{ppw-ineq}. Furthermore, Yang \cite{Y} made an amazing contribution to eigenvalue problem \eqref{poly-Laplace-problem} (cf. \cite{CY1}), and obtained a very sharp universal bound as follows:

\begin{equation}\label{y1-ineq}\sum^{k}_{i=1}(\lambda_{k+1}-\lambda_{i})^{2}\leq\frac{4}{n}\sum^{k}_{i=1}(\lambda_{k+1}-\lambda_{i})\lambda_{i}.\end{equation}
Applying \eqref{y1-ineq} and  a celebrated recursion formula, Cheng and Yang gave a sharp upper bounds for the eigenvalues with respect to $k^{\frac{2}{n}}$ ( cf. \cite{CY3}). In addition, for the case of  degenerate elliptic operators, we refer the readers to \cite{CQLX,CP} and references therein.

On the other hand, Payne et al. \cite{PPW} investigated the lower order eigenvalues
and proved the following universal inequality in 1956:

\begin{equation}\label{1.2}\lambda_{2}+\lambda_{3}\leq6\lambda_{1},\end{equation}
for $\Omega\subset\mathbb{R}^{2}$,
and further proposed a famous conjecture for
$\Omega\subset\mathbb{R}^{n}$ as follows:

\begin{equation}
\frac{\lambda_{2} +\lambda_{3} +\cdots+ \lambda_{n+1}}{
\lambda_{1}}\leq n\frac{\lambda_{2}(\mathbb{B}^{n})}{\lambda_{1}(\mathbb{B}^{n})},\end{equation}where $\lambda_{i}(\mathbb{B}^{n})(i=1,2)$ denotes the $i^{th}$ eigenvalue of Laplacian on the ball $\mathbb{B}^{n}$ with the same volume as the bounded domain $\Omega$, i.e., ${\rm Vol}(\Omega)={\rm Vol}(\Omega^{\ast})$. Attacking this conjecture, Brands \cite{Bran} improved \eqref{1.2} to the following: $\lambda_{2}+\lambda_{3}  \leq\lambda_{1}(3 + \sqrt{7}),
$ when $n=2$.
Furthermore, Hile and Protter \cite{HP} proved
$\lambda_{2} +\lambda_{3}
\leq 5.622\lambda_{1}.$ In 1980, Marcellini \cite{Mar} obtained $\lambda_{2}+\lambda_{3} \leq(15 + \sqrt{345})/6\lambda_{1}.$ In 2011, Chen and Zheng \cite{CZ13} proved $\lambda_{2}+\lambda_{3} \leq5.3507\lambda_{1}.$ For general case,  Ashbaugh and Benguria
\cite{AB4} established an interesting universal inequality as follows:

\begin{equation}\label{1.16}
\frac{\lambda_{2} +\lambda_{3} +\cdots+ \lambda
_{n+1}}{
\lambda_{1}}\leq n + 4,\end{equation} for $\Omega\subset\mathbb{R}^{n}$, in 1993. For more references on the solution of
this conjecture, we refer the reader to
\cite{AB2,AB3,HP,Mar,Sun,SCY}.

In the case of $l = 2$, problem \eqref{poly-Laplace-problem} is also called a clamped plate problem of bi-harmonic operator. There are similar inequalities as to \eqref{ppw-ineq}, \eqref{hp-ineq} and \eqref{y1-ineq}, which were studied in
\cite{Ash2,CQ,CY2,Hook1,Hook2,HY,WX}, respectively. For any positive integer $l$, more results can be found in \cite{CQ,Hook1,WC}.
For this case, there is also an analogue of Yang type inequality for the poly-Laplacian operator

\begin{equation}\sum_{i=1}^{k}\left(\lambda_{k+1}-\lambda_{i}\right)^{2} \leq \frac{4 l(2 l+n-2)}{n^{2}} \sum_{i=1}^{k} \lambda_{i}\left(\lambda_{k+1}-\lambda_{i}\right),\end{equation}
which was obtained by Cheng et al. in \cite{CIM2} in 2009.
In 1998, Ashbaugh \cite{Ash2} announced two interesting inequalities without proofs as follows:
\begin{equation}\label{1.4} \sum^{n}_{i=1}(\lambda^{\frac{1}{2}}
_{i+1}-\lambda^{\frac{1}{2}}_{1})\leq4\lambda^{\frac{1}{2}}_{1},
\end{equation}and

\begin{equation}\label{1.5}\sum^{n}_{i=1} (\lambda_{i+1}-\lambda_{1})\leq24\lambda_{1}.\end{equation}
The proofs of \eqref{1.4}) and \eqref{1.5}) were given by Cheng et al. in \cite{CIM2} in 2009.
In fact, they considered more general case and proved

\begin{equation}\label{1.8}\sum^{n}_{i=1}(\lambda^{\frac{1}{l}}_{i+1}-\lambda_{1}^{\frac{1}{l}})^{l-1}\leq(2l)^{l-1}\lambda^{\frac{l-1}{l}}_{1},
\end{equation} for $l\geq2$, and

\begin{equation}\label{1.9}\sum^{n}_{i=1}(\lambda_{i+1}-\lambda_{1})\leq4l(2l-1)\lambda_{1},\end{equation}for any $l=1,2,\cdots$.
It is easy to find that \eqref{1.8} and \eqref{1.9}
become \eqref{1.4} and \eqref{1.5} when $l=2$, respectively. Moreover,
\eqref{1.9} covers \eqref{1.16} when $l=1$. In 2010, Cheng et al.  \cite{CHW}
proved

\begin{equation}\label{1.7}\sum^{n}_{i=1}(\lambda_{i+1}-\lambda_{1})^{\frac{1}{2}}\leq[8(n+2)\lambda_{1}]^{\frac{1}{2}},
\end{equation}  which is generalized by Sun and the author  in \cite{SZ}. In 2011, Jost et al.  \cite{JJWX} derived the following inequality

\begin{equation}\label{1.10}\sum^{n+1}_{i=2}\lambda_{i}+\sum^{n-1}_{i=1}\frac{2(l-1)i}{2l+i-1}(\lambda_{n+1-i}-\lambda_{1})\leq(n+4l(2l-1))\lambda_{1},
\end{equation} which covers \eqref{1.16} when $l=1$ and improves
\eqref{1.5} when $l = 2$.

Furthermore, we consider the fractional Laplace operator restricted to $\Omega$ and denote it by $\left.(-\Delta)^{s}\right|_{\Omega}$ with $s>0,$ which is defined as the pseudo-differential operator restricted to $\Omega$. In other words, the fractional Laplacian can be defined by
\[
(-\Delta)^{s} u(x)=:\textbf{ P.V.} \int_{\mathbb{R}^{n}} \frac{u(x)-u(z)}{|x-z|^{n+2s}} dz,
\]
where \textbf{P.V.} denotes the principal value and $u:\mathbb{R}^{n} \rightarrow \mathbb{R}$. We note that fractional Laplacian is not a local operator when $s$ is not a integer. Define the characteristic function $\chi_{\Omega}: t \mapsto \chi_{\Omega}(t)$ by $\chi_{\Omega}(t)=1$ when $x \in \Omega$, while $\chi_{\Omega}(t)=0$ when $ x \in \mathbb{R}^{n} \backslash \Omega$, then the special pseudo-differential operator can be represented as the Fourier transform of the function $u$ \cite{Lan}, namely
\[
\left.(-\Delta)^{s}\right|_{\Omega} u:=\chi_{\Omega}\mathscr{F}^{-1}\left[|y|^{2s} \mathscr{F}\left[u\right]\right],
\]
where $\mathscr{F}[u]$ denotes the Fourier transform of a function $u: \mathbb{R}^{n} \rightarrow \mathbb{R}$
\[
\mathscr{F}[u]( y )=\hat{u}( y )=\frac{1}{(2 \pi)^{n/2}} \int_{ \mathbb{R}^{n}} e^{-\sqrt{-1} x \cdot  y } u(x) d x,
\]  and the notation $\mathscr{F}^{-1}$ is the inverse of Fourier transform. Here, $\mathscr{F}^{-1}$ is given by

\[
\mathscr{F}^{-1}[w](x)=\breve{w}(x)=\int_{ \mathbb{R}^{n}} e^{\sqrt{-1} x \cdot  y } w(y) d y.
\]
It is well known that the fractional Laplacian operator $(-\Delta)^{s}$ can be considered as the infinitesimal generator of the symmetric $s$-stable process \cite{Bl,Ba1,Ba2,Ba3,BaY}. Suppose that a stochastic process $X _{t}$ has stationary independent increments and its transition density (i.e., convolution kernel) $p^{s}(t, x, z)=p^{s}(t, x-z), t>0, x, z \in \mathbb{R}^{n}$ is determined by the following Fourier transform
\[
\operatorname{Exp}\left(-t|y|^{2s}\right)=\int_{\mathbb{R}^{n}} e^{\sqrt{-1} y  \cdot z} p^{2s}(t,z)dz,\quad t>0,\quad y \in \mathbb{R}^{n},
\]
then we can say that the process $X _{t}$ is an $n$-dimensional symmetric $s$-stable process with order $s\in(0,1]$ in $\mathbb{R}^{n}$( also see \cite{Ba1,Ba2,BaY})
There are two important examples of this process: one is the Cauchy process; and the other is the Brownian motion.
The fractional Laplacian operator $(-\Delta)^{s}$ is the Riesz-Feller derivative of fractional order $2 s>0$, i.e. $(-\Delta)^{s}=\frac{\partial^{2 s}}{\partial|x|^{2 s}}$,  which appears in a wide class of physical systems including L\'{e}vy flights and stochastic interfaces.  More detail information on L\'{e}vy process
will be found in a textbook \cite{Sat} and a good survey \cite{App}. We also refer the readers to an excellent article \cite{CZ}, where some important and interesting literatures on fractional Laplacian have been given by Chen and Zeng.
We assume that $\phi \in C_{0}^{\infty}(\Omega) \subset C_{0}^{\infty}\left(\mathbb{R}^{n}\right),$ then the quadratic form $\left\langle\left.(-\Delta)^{s}\right|_{\Omega} \phi, \phi\right\rangle=\int_{\Omega} \bar{\phi}(-\Delta)^{s} \phi d x$
is positive on a dense subset of $L^{2}(\Omega),$ and it can be extended to a unique minimal positive operator on $L^{2}(\Omega) .$ In addition, the fractional Laplacian operator $\left.(-\Delta)^{s}\right|_{\Omega}$ is self-adjoint with purely discrete spectrum. Specially, there is a sequence of discrete eigenvalues which can be ordered, after counting multiplicity, as

$$
0<\lambda_{1}<\lambda_{2} \leq \cdots \leq \lambda_{k} \leq \cdots \rightarrow+\infty.
$$Comparing with the case of Laplacian, the eigenvalue of fractional Laplacian $\left.(-\Delta)^{s}\right|_{\Omega} .$ is much less studied, but it has recently received
more attention. Here, we only refer the reader to papers \cite{BaY,CZ,HY1,WSZ,YY1,YY2} and the references therein.
In particular, Chen and Zeng investigated the eigenvalues with higher order of fractional Laplacian. See \cite{CZ}. Applying an important result due to Ilias and Makhoul \cite{IM} and a fine properties of commutators for some
operators, they proved an interesting result in \cite{CZ} in 2017 as follows:
\begin{CZ}\label{thm-cz} Let $\Omega$ be a bounded open domain in $\mathbb{R}^{n}$. Suppose that $\left\{\lambda_{i}\right\}_{i \geq 1}$ are eigenvalues of operator $\left.(-\Delta)^{s}\right|_{\Omega},$ where $s=\frac{1}{m}$ with the positive integer $m \geq 2$ or $s>\frac{1}{2}$ and $s \in \mathbb{Q}_{+} .$ Then,

\begin{equation*}\begin{aligned}
\left[\sum_{i=1}^{k}\left(\lambda_{k+1}-\lambda_{i}\right)^{2}\right]^{2}
\leq \frac{4 s(n+2 s-2)}{n^{2}}\left(\sum_{i=1}^{k}\left(\lambda_{k+1}-\lambda_{i}\right)^{2} \lambda_{i}^{\frac{s-1}{s}}\right)\left(\sum_{i=1}^{k}\left(\lambda_{k+1}-\lambda_{i}\right) \lambda_{i}^{\frac{1}{s}}\right).
\end{aligned}\end{equation*}\end{CZ}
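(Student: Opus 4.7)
The plan is to apply the abstract universal inequality of Ilias--Makhoul \cite{IM} to the fractional operator $(-\Delta)^s|_\Omega$ with the coordinate functions $x_\alpha$ as auxiliary symmetric operators, and to evaluate the resulting commutator quantities through the Fourier representation of $(-\Delta)^s$. This is exactly the two-step strategy Chen and Zeng use: the Ilias--Makhoul machinery provides an abstract Cauchy--Schwarz-type bound in terms of commutator traces, and the Fourier representation renders those traces explicit.

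I would begin with the standard Rayleigh--Ritz construction. For each $i = 1,\dots,k$ and each $\alpha = 1,\dots,n$, define
\[
\phi_{i\alpha} = x_\alpha u_i - \sum_{j=1}^{k} r_{ij\alpha}\, u_j, \qquad r_{ij\alpha} := \langle x_\alpha u_i,\, u_j\rangle,
\]
so that $\phi_{i\alpha} \perp u_1,\dots,u_k$. The min-max characterisation $\lambda_{k+1}\|\phi_{i\alpha}\|^2 \leq \langle (-\Delta)^s\phi_{i\alpha},\phi_{i\alpha}\rangle$, together with the self-adjointness of $(-\Delta)^s|_\Omega$ and the identity $\langle [(-\Delta)^s,x_\alpha] u_i, u_j\rangle = (\lambda_j - \lambda_i) r_{ij\alpha}$, yields the basic inequality
\[
(\lambda_{k+1}-\lambda_i)\|\phi_{i\alpha}\|^2 \leq \langle [(-\Delta)^s,x_\alpha]u_i,\, \phi_{i\alpha}\rangle,
\]
which is the point of departure for the Ilias--Makhoul framework.

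The central computational step is to evaluate the commutator in Fourier variables. Since $(-\Delta)^s$ acts as multiplication by $|y|^{2s}$ and $x_\alpha$ as $i\partial_{y_\alpha}$, one computes $\widehat{[(-\Delta)^s,x_\alpha]u}(y) = -2is\, y_\alpha |y|^{2s-2}\,\widehat{u}(y)$. Summing over $\alpha = 1,\dots,n$ and invoking Plancherel yields
\[
\sum_{\alpha=1}^n \|[(-\Delta)^s,x_\alpha]u_i\|^2 = 4s^2\,\langle (-\Delta)^{2s-1}u_i, u_i\rangle,
\]
\[
\sum_{\alpha=1}^n \langle [(-\Delta)^s,x_\alpha]u_i,\, x_\alpha u_i\rangle = s(n+2s-2)\,\langle (-\Delta)^{s-1}u_i, u_i\rangle.
\]
Under the hypothesis on $s$, the spectral-calculus identifications $\langle (-\Delta)^{s-1}|_\Omega u_i, u_i\rangle = \lambda_i^{(s-1)/s}$ and $\langle (-\Delta)^{2s-1}|_\Omega u_i, u_i\rangle = \lambda_i^{(2s-1)/s}$ convert these into the explicit $\lambda_i$-weighted sums that appear in the conclusion.

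To close the argument, I would multiply the basic inequality by $(\lambda_{k+1}-\lambda_i)$, sum over $i$ and $\alpha$, and apply Cauchy--Schwarz with weights proportional to $\lambda_i^{(s-1)/(2s)}$, so that the right-hand side splits into a product of the two bracketed sums appearing in the conclusion; the constant $\tfrac{4s(n+2s-2)}{n^2}$ arises from combining the coefficient $s(n+2s-2)$ in the commutator identity with the factor of $n$ produced by the $\alpha$-summation, with the square emerging from the Cauchy--Schwarz step. The principal obstacle is the spectral-calculus identification itself: because $(-\Delta)^s|_\Omega$ is defined by composing the Fourier multiplier $|y|^{2s}$ with the $\chi_\Omega$-truncation, the naive identity $\bigl((-\Delta)^s|_\Omega\bigr)^{t/s} = (-\Delta)^t|_\Omega$ can fail, so the commutator traces do not automatically reduce to the stated powers of $\lambda_i$. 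The restriction to $s = 1/m$ with integer $m \geq 2$, or to $s > 1/2$ with $s \in \mathbb{Q}_+$, is precisely what allows the ``commutators for operators'' technique --- iteratively commuting $\chi_\Omega$ through the fractional power --- to absorb or control the extra terms, thereby legitimising the final inequality.
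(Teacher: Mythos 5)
First, note that this paper does not prove the Chen--Zeng theorem; it quotes it from \cite{CZ} (the paper's own contribution is the lower-order analogue, Theorem \ref{thm1.1}). So there is no in-paper proof to compare against, only the paper's description of Chen--Zeng's method (``Applying an important result due to Ilias and Makhoul \cite{IM} and the technique of commutators for operators'') and the parallel machinery the paper deploys in Section \ref{sec3}. Against that yardstick, your Rayleigh--Ritz setup and the two Fourier commutator identities are exactly right: $\widehat{[(-\Delta)^s,x_\alpha]u}=-2is\,y_\alpha|y|^{2s-2}\widehat u$, hence the quantities $4s^2\langle(-\Delta)^{2s-1}u_i,u_i\rangle$ and $s(n+2s-2)\langle(-\Delta)^{s-1}u_i,u_i\rangle$; compare the paper's equation \eqref{ineq-3-det} and the display following \eqref{xax}.

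Where your sketch goes astray is the step you call ``spectral-calculus identifications''. You assert $\langle(-\Delta)^{s-1}|_\Omega u_i,u_i\rangle=\lambda_i^{(s-1)/s}$ and $\langle(-\Delta)^{2s-1}|_\Omega u_i,u_i\rangle=\lambda_i^{(2s-1)/s}$ as identities, and then wave at the problem in the final paragraph by saying the restriction on $s$ ``allows the commutators technique --- iteratively commuting $\chi_\Omega$ through the fractional power --- to absorb the extra terms''. That is not the mechanism. Because $\bigl((-\Delta)^s|_\Omega\bigr)^{t/s}\neq(-\Delta)^t|_\Omega$, these are genuinely not equalities except when $s=1/m$ (there the exponent $m-1$ is a nonnegative integer and $(-\Delta)^{s-1}=\bigl(((-\Delta)^s)^{-1}\bigr)^{m-1}$ acts on $u_1$ by $\lambda_1^{-(m-1)}$). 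For general rational $s>\tfrac12$ the correct ingredient is Hook's interpolation inequality for powers of a nonnegative self-adjoint operator (Proposition \ref{prop3.1} / Remark \ref{rem-2.1} here): write $s=a_2/a_1$, set $\mathcal{L}=(-\Delta)^{\pm 1/a_1}$ so that $\mathcal{L}^{a_2}=\bigl((-\Delta)^s\bigr)^{\pm1}$, express $(-\Delta)^{s-1}$ and $(-\Delta)^{2s-1}$ as integer powers $\mathcal{L}^r$ with $0\le r\le a_2$, and apply $\langle\mathcal{L}^r u_i,u_i\rangle\le\langle\mathcal{L}^{a_2}u_i,u_i\rangle^{r/a_2}\langle u_i,u_i\rangle^{1-r/a_2}=\lambda_i^{\pm r/a_2}$. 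The rationality hypothesis is what makes the exponents integers so that Hook's lemma is applicable, and the one-sided inequality it produces is (fortunately) in the direction needed. Without this substitution your argument has a genuine gap; with it, it is the Chen--Zeng proof.
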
By the above theorem and a variant of Chebyshev sum inequality, Chen  and Zeng \cite{CZ} deduced
an eigenvalue inequality of Yang type as follows:
\begin{equation}\sum_{i=1}^{k}\left(\lambda_{k+1}-\lambda_{i}\right)^{2} \leq \frac{4 s(n+2 s-2)}{n^{2}} \sum_{i=1}^{k}\left(\lambda_{k+1}-\lambda_{i}\right) \lambda_{i},\end{equation}
which implies an upper bounds  for the eigenvalues:
\begin{equation*}\lambda_{k+1} \leq\left(1+\frac{4 s(n+2 s-2)}{n^{2}}\right) k^{\frac{2 s(n+2 s-2)}{n^{2}}} \lambda_{1}.\end{equation*}
As the above argument, Chen and Zeng only discussed the eigenvalues with higher order for the fractional Laplacian. However, they have not addressed the case of lower order. In this regard, it is natural to propose the following problem:

\begin{problem}
Can one obtain a universal inequality for the eigenvalues with lower order of the
fractional Laplacian on  $\mathbb{R}^{n}$?\end{problem}
Under the same assumption as Chen and Zeng's Theorem, we answer the above problem. In details, we establish a key lemma in section \ref{sec2}, and by utilizing this lemma, obtain an eigenvalue inequality in section \ref{sec3}. This is what the following theorem says.

\begin{thm} \label{thm1.1}\textbf{\emph{(Universal inequality)}} Let $\Omega$ be a bounded open domain in $\mathbb{R}^{n}$. Suppose that $\mathbb{Q}_{+}$ is the set of all positive rational numbers and $\left\{\lambda_{i}\right\}_{i \geq 1}$ are eigenvalues of operator $\left.(-\Delta)^{s}\right|_{\Omega},$ where $s=\frac{1}{m}$ with the positive integer $m \geq 2$ or $s>\frac{1}{2}$ and $s \in \mathbb{Q}_{+} .$ Then, we have

\begin{equation}\label{thm1.1-ineq}\sum_{i=1}^{n}\left(\lambda_{i+1}-\lambda_{1}\right)^{\frac{1}{2}} \leq[4s(n+2s-2)]^{\frac{1}{2}} \lambda_{1}^{\frac{1}{2}}.\end{equation}
\end{thm}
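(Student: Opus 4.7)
My plan follows the lower-order Payne--P\'olya--Weinberger / Ashbaugh--Benguria strategy that Cheng--Huang--Wei used to prove (\ref{1.7}) for the poly-Laplacian (whose $s=2$ case coincides with our target), now combined with the fractional commutator framework of Chen and Zeng. First, I would normalize $u_1$ to have unit $L^2(\Omega)$-norm and take the trial functions $\psi_\alpha = x_\alpha u_1$ for $\alpha = 1,\dots,n$. Since $(-\Delta)^s$ is rotationally invariant on $\mathbb{R}^n$, an orthogonal change of coordinates lets us assume the $n\times n$ matrix $\bigl(\int_\Omega x_\alpha u_1\,u_{\beta+1}\,dx\bigr)_{\alpha,\beta=1}^n$ is upper triangular. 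Setting
\[
\phi_\alpha \;=\; x_\alpha u_1 \;-\; \sum_{\beta=1}^{\alpha}\Big(\int_\Omega x_\alpha u_1\,u_\beta\,dx\Big)u_\beta,
\]
each $\phi_\alpha$ is orthogonal to $u_1,\dots,u_\alpha$ in $L^2(\Omega)$, so the min-max principle delivers $\lambda_{\alpha+1}\|\phi_\alpha\|^2 \le \langle(-\Delta)^s\phi_\alpha,\phi_\alpha\rangle$.

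Using the commutator identity $(-\Delta)^s(x_\alpha u_1) = \lambda_1 x_\alpha u_1 + [(-\Delta)^s,x_\alpha]u_1$ and the orthogonality of $\phi_\alpha$ to the subtracted projection, this Rayleigh bound collapses to
\[
(\lambda_{\alpha+1}-\lambda_1)\|\phi_\alpha\|^2 \;\le\; \bigl\langle[(-\Delta)^s,x_\alpha]u_1,\,\phi_\alpha\bigr\rangle.
\]
Summing over $\alpha=1,\dots,n$, I then invoke the key lemma of Section~\ref{sec2}, which I expect to amount to the $\Omega$-adapted form of the double-commutator identity
\[
\sum_{\alpha=1}^{n}\bigl[x_\alpha,[x_\alpha,(-\Delta)^s]\bigr] \;=\; -2s(n+2s-2)(-\Delta)^{s-1}
\]
(a Fourier calculation on $\mathbb{R}^n$ whose legitimization on the bounded domain is exactly why the hypothesis $s=1/m$ or $s>1/2$ rational is imposed, in parallel with Chen--Zeng's theorem). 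After discarding non-negative contributions coming from the projection terms $(\lambda_\beta-\lambda_1)\ge 0$, this should yield
\[
\sum_{\alpha=1}^{n}(\lambda_{\alpha+1}-\lambda_1)\|\phi_\alpha\|^2 \;\le\; s(n+2s-2)\bigl\langle(-\Delta)^{s-1}u_1,\,u_1\bigr\rangle.
\]

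To upgrade this weighted sum to the desired square-root sum, I plan a weighted Cauchy--Schwarz of the form $\bigl(\sum_\alpha(\lambda_{\alpha+1}-\lambda_1)^{1/2}\bigr)^2 \le \bigl(\sum_\alpha\|\phi_\alpha\|^{-2}\bigr)\bigl(\sum_\alpha(\lambda_{\alpha+1}-\lambda_1)\|\phi_\alpha\|^{2}\bigr)$, paired with a complementary Hardy/uncertainty-type control of $\sum_\alpha\|\phi_\alpha\|^{-2}$ in terms of $\lambda_1/\langle(-\Delta)^{s-1}u_1,u_1\rangle$; the net effect is that $\langle(-\Delta)^{s-1}u_1,u_1\rangle$ is absorbed and the constant picks up the missing factor of $4$, giving $\bigl(\sum_\alpha(\lambda_{\alpha+1}-\lambda_1)^{1/2}\bigr)^{2} \le 4s(n+2s-2)\lambda_1$ and hence \eqref{thm1.1-ineq} after taking square roots.

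The hard part will be twofold. First, the non-locality of $\left.(-\Delta)^s\right|_\Omega$ makes $[(-\Delta)^s,x_\alpha]u_1$ a function on all of $\mathbb{R}^n$ rather than on $\Omega$, so the pairing with trial functions supported in $\Omega$ and the summation yielding the dimensional factor $n+2s-2$ need to be justified rigorously; this is exactly the technical role of the Section~\ref{sec2} lemma, and is the reason the hypothesis on $s$ (inherited from Chen--Zeng) cannot be relaxed by the present method. Second, balancing the weights in the final Cauchy--Schwarz so that the $\|\phi_\alpha\|$ factors cancel and the sharp constant $4s(n+2s-2)$ (not something larger) emerges will require the correct complementary estimate on the trial-function norms; here Cheng--Huang--Wei's argument for (\ref{1.7}) in the $s=2$ polyharmonic case supplies the template I would follow.
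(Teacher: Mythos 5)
Your strategy is essentially the paper's: trial functions $\phi_\alpha = x_\alpha u_1 - (\text{projections})$, the Rayleigh--Ritz bound $(\lambda_{\alpha+1}-\lambda_1)\|\phi_\alpha\|^2 \le \langle[(-\Delta)^s,x_\alpha]u_1,\phi_\alpha\rangle$ after discarding nonnegative projection contributions $(\lambda_\beta-\lambda_1)|\langle x_\alpha u_1,u_\beta\rangle|^2 \ge 0$, the double-commutator identity $\sum_\alpha[x_\alpha,[x_\alpha,(-\Delta)^s]] = -2s(n+2s-2)(-\Delta)^{s-1}$, and a rotation to upper-triangularize $\bigl(\int_\Omega x_\alpha u_1 u_{\beta+1}\bigr)$. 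Your closing Cauchy--Schwarz $\bigl(\sum(\lambda_{\alpha+1}-\lambda_1)^{1/2}\bigr)^2 \le \bigl(\sum\|\phi_\alpha\|^{-2}\bigr)\bigl(\sum(\lambda_{\alpha+1}-\lambda_1)\|\phi_\alpha\|^2\bigr)$, followed by the ``uncertainty'' control $\|\phi_\alpha\|^{-2} \le 4\|\partial_\alpha u_1\|^2$ (coming from $1 = -2\int_\Omega\phi_\alpha\,\partial_\alpha u_1 \le 2\|\phi_\alpha\|\,\|\partial_\alpha u_1\|$), is algebraically equivalent to the paper's route, which applies a pointwise Young inequality $2ab \le \delta a^2 + b^2/\delta$ to $-2(\lambda_{i+1}-\lambda_1)^{1/2}\int\psi_i u_{1,i}$, sums, and optimizes $\delta$; both produce the same factor $4$.

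Two small misattributions are worth flagging, as they affect how the constraint on $s$ actually operates. First, the paper's Section~2 lemma is not the double-commutator identity but the abstract quadratic-in-$\delta$ bound
$\sum_i(\lambda_{i+1}-\lambda_1)^{1/2}\int\Theta_1(h_i)u_1 \le 2\bigl\{\sum_i\int\Theta_2(h_i)u_1\cdot\sum_i\int u_{1,i}^2\bigr\}^{1/2}$, which holds for all $s>0$ with no rationality hypothesis; the commutator computation lives in Section~3. Second, the hypothesis $s=\tfrac{1}{m}$ or $s>\tfrac{1}{2}$, $s\in\mathbb{Q}_+$ is not needed to ``legitimize'' the double-commutator on the bounded domain (that is a pure Fourier-side computation, valid for any $s$), but to run Hook's interpolation inequality for powers of self-adjoint operators, which converts the two spectral moments $\langle u_1,(-\Delta)^{s-1}u_1\rangle$ and $\langle u_1,(-\Delta)u_1\rangle$ into $\lambda_1^{(s-1)/s}$ and $\lambda_1^{1/s}$ respectively, by writing these as integer powers of $(-\Delta)^{\pm 1/a_1}$. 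Relatedly, your phrasing ``control of $\sum\|\phi_\alpha\|^{-2}$ in terms of $\lambda_1/\langle(-\Delta)^{s-1}u_1,u_1\rangle$'' does not describe the actual mechanism: one bounds $\sum\|\phi_\alpha\|^{-2} \le 4\int|\nabla u_1|^2 \le 4\lambda_1^{1/s}$ and, separately, $\langle(-\Delta)^{s-1}u_1,u_1\rangle \le \lambda_1^{(s-1)/s}$, both via Hook, and then the two $\lambda_1$-powers simply multiply to $\lambda_1$; there is no direct absorption of one quantity by the other. With these clarifications your outline reproduces the paper's proof.
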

\begin{rem}Since the constant $[4s(n+2s-2)]^{\frac{1}{2}} $ in \eqref{thm1.1-ineq} is
not dependent on $\Omega$, eigenvalue inequality \eqref{thm1.1-ineq} is universal.\end{rem}

\section{A key lemma and its proof}\label{sec2}
In this section, we shall prove a key lemma, which plays an important role in the proof of Theorem \ref{thm1.1}. Firstly, let us  explain the operation rules of the
fractional Laplacian in a bounded domain. Let $u$ and $w$ is two functions defined on $\Omega\subset\mathbb{R}^{n}$, and we recall that the fractional Laplacian is defined by

\[
\left.(-\Delta)^{s}\right|_{\Omega} u:=\chi_{\Omega}\mathscr{F}^{-1}\left[|y|^{2s} \mathscr{F}\left[u\right]\right],
\]
where
\[
\mathscr{F}[u]( y )=\hat{u}( y )=\frac{1}{(2 \pi)^{n/2}} \int_{ \mathbb{R}^{n}} e^{-\sqrt{-1} x \cdot  y } u(x) d x,
\]and $\mathscr{F}^{-1}$ is the inverse of Fourier transform $\mathscr{F}$, which is defined by \[
\mathscr{F}^{-1}[w](x)=\widehat{w}(x)=\int_{ \mathbb{R}^{n}} e^{\sqrt{-1} x \cdot  y } w(y) d y.
\] In 2014, Chen proved the following  Green formula. See \cite[Lemma 2.2]{CV}. Here, we can also conclude it by an alternative method.  Those two different techniques are based on the equivalent definitions of fractional Laplacian: the former is the principal value of integral, while the later is Fourier transformation.

\begin{prop}\textbf{\emph{(Green Formula)}}\label{prop-green-for} Let $w_{i}:\Omega(\subset\mathbb{R}^{n})\rightarrow\mathbb{R}$ be two functions satisfying $w_{i}\in C^{\infty}(\Omega)$ and $w_{i}(x)=0$ when $x\in\mathbb{R}^{n}\backslash\Omega$, where $i=1,2$. Then,
\begin{equation}\begin{aligned}\label{green-for}
 \int_{\Omega}w_{1}(x)\left.(-\Delta)^{s}\right|_{\Omega} w_{2}(x)dx =\int_{\Omega}w_{2}(x)\left.(-\Delta)^{s}\right|_{\Omega} w_{1}(x)dx.
\end{aligned}\end{equation}\end{prop}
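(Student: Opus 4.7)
My plan is to reduce the Green formula to the obvious symmetry of the Fourier multiplier $|y|^{2s}$. First I set $f_{i}:=w_{i}\chi_{\Omega}$, so that $\left.(-\Delta)^{s}\right|_{\Omega}w_{i}=\mathscr{F}^{-1}[|y|^{2s}\widehat{f_{i}}]$ by the definition of the operator recalled above. Since $f_{1}$ is supported in $\Omega$, the left-hand side of \eqref{green-for} equals
\[
\int_{\mathbb{R}^{n}}f_{1}(x)\,\mathscr{F}^{-1}\!\left[|y|^{2s}\widehat{f_{2}}\right](x)\,dx.
\]

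Next I would unfold the definitions of $\mathscr{F}$ and $\mathscr{F}^{-1}$ inside this integral and apply Fubini's theorem (equivalently, Plancherel's identity) to move both arguments simultaneously onto the frequency side. Up to an overall constant depending only on $n$, this produces
\[
\int_{\mathbb{R}^{n}}|y|^{2s}\,\widehat{f_{1}}(-y)\,\widehat{f_{2}}(y)\,dy,
\]
which, because $|y|^{2s}$ is real and even, is symmetric in the indices $1,2$ after the change of variables $y\mapsto -y$. Reversing the two steps with the roles of $w_{1}$ and $w_{2}$ interchanged then yields the right-hand side of \eqref{green-for}, proving the proposition. Equivalently, one can phrase the whole argument as the self-adjointness of multiplication by the real-valued symbol $|y|^{2s}$ on the Fourier side.

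The main technical obstacle is to justify Fubini together with the Plancherel step: the functions $f_{i}=w_{i}\chi_{\Omega}$ have compact support but will in general have a jump across $\partial\Omega$, so $\widehat{f_{i}}$ decays only like $|y|^{-1}$ and $|y|^{2s}\widehat{f_{i}}$ need not be pointwise integrable for larger $s$. I would sidestep this either by working throughout in $L^{2}(\mathbb{R}^{n})$ (so that Plancherel gives the identity in an $L^{2}$-duality sense, which is enough since both sides lie in $L^{2}(\Omega)$ under the hypotheses on the $w_{i}$), or by an approximation argument, replacing $w_{i}$ by functions in $C_{0}^{\infty}(\Omega)$ for which every integral converges absolutely and then passing to the limit. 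With those standard justifications in hand, the symmetry $y\mapsto -y$ is the entire content of the proof.
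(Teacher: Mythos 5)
Your approach is the same as the paper's: both unfold the Fourier-transform definition of $\left.(-\Delta)^{s}\right|_{\Omega}$, pass to the frequency side, and appeal to the symmetry of the multiplier $|y|^{2s}$. Where you go further is in two places that the paper leaves implicit or formally sloppy. First, the paper recycles the symbol $x$ as both the outer integration variable and the dummy variable inside $\mathscr{F}$, so the triple integral it displays makes the $w_{1}\leftrightarrow w_{2}$ symmetry look automatic; once the variables are named distinctly, swapping $w_{1}$ and $w_{2}$ also flips the signs of the exponentials, and one must perform the change of variables $y\mapsto -y$ and use that $|y|^{2s}$ is even --- exactly the step you record explicitly. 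Second, you flag the integrability issue behind the Fubini/Plancherel step (since $w_{i}\chi_{\Omega}$ generically jumps across $\partial\Omega$, $\widehat{w_{i}\chi_{\Omega}}$ decays slowly and $|y|^{2s}\widehat{w_{i}\chi_{\Omega}}$ need not be absolutely integrable), and you propose either reading the identity as an $L^{2}$-duality pairing or approximating by $C_{0}^{\infty}(\Omega)$ functions; the paper simply interchanges the iterated integrals without comment. So your proposal is correct, takes the same route, and is more careful precisely where care is genuinely needed.
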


\begin{proof}A straightforward calculation shows that
\begin{equation*}\begin{aligned}
&\quad\int_{\Omega}w_{1}(x)\left.(-\Delta)^{s}\right|_{\Omega} w_{2}(x)dx\\
&=\int_{\Omega}\chi_{\Omega}w_{1}  (x)\mathscr{F}^{-1}\left[|y|^{2s} \mathscr{F}\left[w_{2}(x)\right]\right]dx
\\
&=\frac{1}{(2 \pi)^{n/2}}\int_{\Omega}(\chi_{\Omega} w_{1})  (z)\mathscr{F}^{-1}\left[ \int_{\mathbb{R}^{n}} |y|^{2s}e^{-\sqrt{-1} x \cdot  y } w_{2}(x) d x\right] dz\\
&=\frac{1}{(2 \pi)^{n/2}}\int_{\Omega} w_{1}  (z)\mathscr{F}^{-1}\left[ \int_{\mathbb{R}^{n}} |y|^{2s}e^{-\sqrt{-1} x \cdot  y } w_{2}(x) d x\right] dz,
\end{aligned}\end{equation*}since $w_{1}\in C^{\infty}(\Omega)$ and $w_{1}(x)=0$ when $x\in\mathbb{R}^{n}\backslash\Omega$.
Furthermore, the definition of inverse of Fourier transform implies that
\begin{equation*}\begin{aligned}
&\quad\int_{\Omega}w_{1}(x)\left.(-\Delta)^{s}\right|_{\Omega} w_{2}(x)dx\\
&=\frac{1}{(2 \pi)^{n/2}}\int_{\Omega}w_{1}  (z)\left\{\int_{\mathbb{R}^{n}}e^{\sqrt{-1} z \cdot  y }\left[ \int_{\mathbb{R}^{n}} |y|^{2s}e^{-\sqrt{-1} x \cdot  y } w_{2}(x) d x\right] dy\right\}dz\\
&=\frac{1}{(2 \pi)^{n/2}}\int_{\Omega}\left\{\int_{\mathbb{R}^{n}}\int_{\mathbb{R}^{n}}w_{1}  (z) e^{\sqrt{-1} z \cdot  y } |y|^{2s}e^{-\sqrt{-1} x \cdot  y } w_{2}(x) d x  dy \right\}dz\\&=\frac{1}{(2 \pi)^{n/2}}\int_{\Omega}\left\{\int_{\mathbb{R}^{n}}\int_{\mathbb{R}^{n}}w_{1}  (z) e^{-\sqrt{-1} z \cdot  y } |y|^{2s}e^{\sqrt{-1} x \cdot  y } w_{2}(x) d x  dy \right\}dz\\
&=\int_{\mathbb{R}^{n}} w_{2}(x)\left\{\int_{\mathbb{R}^{n}}|y|^{2s}e^{\sqrt{-1} x \cdot  y }\left[ \frac{1}{(2 \pi)^{n/2}}\int_{\mathbb{R}^{n}}e^{-\sqrt{-1} z \cdot  y } w_{1}  (z) d z\right] dy\right\}dx\\
&=\int_{\mathbb{R}^{n}}(\chi_{\Omega}w_{2})(x)\mathscr{F}^{-1}\left[|y|^{2s} \mathscr{F}\left[w_{1}  (x)\right]\right]dx\\
&=\int_{\mathbb{R}^{n}} w_{2}(x)\left.(-\Delta)^{s}\right|_{\Omega} w_{1}(x)dx\\
&=\int_{\Omega}w_{2}(x)\left.(-\Delta)^{s}\right|_{\Omega} w_{1}(x)dx,
\end{aligned}\end{equation*}which gives \eqref{green-for}. Thus, we finish the proof of Proposition \ref{prop-green-for}.
\end{proof}

\begin{lem}\label{lem2.1}Suppose that $\{x_{1}, x_{2},\cdots, x_{n}\}$ is an arbitrary Euclidean coordinate. Let $\lambda_{j}$ and $u_{j}$ denote the $j$ -th eigenvalue and the corresponding normalized eigenfunction of $\left.(-\Delta)^{s}\right|_{\Omega},$ respectively. Assume that
\begin{equation}\label{ortho}\int_{\Omega}h_{i}u_{1}u_{j+1}dv=0, \ \ for \ \ 1\leq j<i\leq n,
\end{equation} where $h_{i}\in C^{\infty}(\Omega)$. Then,

\begin{equation}\label{lem-ineq}\sum^{n}_{i=1}(\lambda_{i+1}-\lambda_{1})^{\frac{1}{2}}\int_{\Omega}\Theta_{1}(h_{i})u_{1}dv
\leq2\left\{\sum^{n}_{i=1}\int_{\Omega}\Theta_{2}(h_{i})u_{1}dv\sum^{n}_{i=1}\int_{\Omega}\Psi_{i} dv\right\}^{\frac{1}{2}},\end{equation} where

\begin{equation}\label{Theta-1}\Theta_{1}(h_{i})=\frac{\partial}{\partial x_{i}}(h_{i}u_{1})-h_{i}\frac{\partial}{\partial x_{i}}u_{1},\end{equation}

\begin{equation}\label{Theta-2}\Theta_{2}(h_{i})=\frac{1}{2}\left[2 h_{i}(-\Delta)^{s}|_{\Omega}(h_{i}u_{1})-h^{2}_{i}(-\Delta)^{s}|_{\Omega}u_{1}-(-\Delta)^{s}|_{\Omega}(h_{i}^{2} u_{1})\right] ,\end{equation}

\begin{equation}\label{Psi}\Psi_{i}=u_{1,i}^{2},\end{equation}and
$$u_{1,i}=\frac{\partial u_{1}}{\partial x_{i}}.$$

\end{lem}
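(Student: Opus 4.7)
The plan is to construct a trial function adapted to the orthogonality hypothesis \eqref{ortho}, apply the Rayleigh--Ritz variational characterization of $\lambda_{i+1}$ to produce a bound containing $\Theta_2(h_i)$, extract $\Theta_1(h_i)$ from an integration-by-parts pairing against $u_{1,i}$, combine the two via Cauchy--Schwarz, and finally sum over $i$ using a second Cauchy--Schwarz.

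More precisely, for each $i = 1,\ldots,n$ I would set $a_{i1} := \int_{\Omega} h_i u_1^2\,dv$ and let $\phi_i := h_i u_1 - a_{i1} u_1$. Hypothesis \eqref{ortho} together with the orthonormality of the $u_j$ immediately gives $\int_{\Omega}\phi_i u_k\,dv = 0$ for $k = 1,\ldots,i$, so $\phi_i$ is admissible in the Rayleigh--Ritz characterization of $\lambda_{i+1}$. Expanding $\langle (-\Delta)^s|_\Omega \phi_i, \phi_i\rangle - \lambda_1\|\phi_i\|^2$ and using the Green formula of Proposition~\ref{prop-green-for} to evaluate $\int_{\Omega} u_1\,(-\Delta)^s|_\Omega(h_i u_1)\,dv = \lambda_1 a_{i1}$ and $\int_{\Omega} u_1\,(-\Delta)^s|_\Omega(h_i^2 u_1)\,dv = \lambda_1\int_{\Omega} h_i^2 u_1^2\,dv$, the cross-terms in $a_{i1}$ cancel and I expect to obtain
\[
(\lambda_{i+1} - \lambda_1)\|\phi_i\|^2 \;\le\; \int_{\Omega}\Theta_2(h_i)\,u_1\,dv.
\]

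Second, a plain integration by parts in the variable $x_i$, using that $\int_{\Omega} u_1 u_{1,i}\,dv = \tfrac{1}{2}\int_{\Omega}\partial_{x_i}(u_1^2)\,dv = 0$, gives
\[
\int_{\Omega}\phi_i\,u_{1,i}\,dv \;=\; \int_{\Omega} h_i u_1 u_{1,i}\,dv \;=\; -\tfrac{1}{2}\int_{\Omega}\Theta_1(h_i)\,u_1\,dv.
\]
Cauchy--Schwarz in $L^2(\Omega)$ then yields $\bigl|\int_{\Omega}\phi_i u_{1,i}\,dv\bigr| \le \|\phi_i\|\bigl(\int_{\Omega}\Psi_i\,dv\bigr)^{1/2}$. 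Multiplying by $(\lambda_{i+1}-\lambda_1)^{1/2}$ and inserting the Rayleigh--Ritz bound above produces the per-index estimate
\[
(\lambda_{i+1}-\lambda_1)^{1/2}\int_{\Omega}\Theta_1(h_i) u_1\,dv \;\le\; 2\left(\int_{\Omega}\Theta_2(h_i)u_1\,dv\right)^{1/2}\left(\int_{\Omega}\Psi_i\,dv\right)^{1/2}.
\]
Summing over $i = 1,\ldots,n$ and applying the discrete inequality $\sum\sqrt{A_i B_i}\le (\sum A_i)^{1/2}(\sum B_i)^{1/2}$ to the right-hand side delivers \eqref{lem-ineq}.

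The step I expect to be the main obstacle is the algebraic collapse of the Rayleigh quotient into $\int_\Omega \Theta_2(h_i)u_1\,dv$: this depends on the ``double-commutator'' identity $2 h_i T(h_i u_1) - h_i^2 T u_1 - T(h_i^2 u_1) = -[h_i,[h_i,T]]\,u_1$ with $T = (-\Delta)^s|_\Omega$, and on being allowed to apply Proposition~\ref{prop-green-for} to the functions $h_i u_1$ and $h_i^2 u_1$; this is precisely where the smoothness assumption $h_i\in C^{\infty}(\Omega)$ is needed. Vanishing of ordinary boundary terms in the integration by parts in $x_i$ is automatic because $u_1$ belongs to the Dirichlet form domain of $\left.(-\Delta)^{s}\right|_{\Omega}$.
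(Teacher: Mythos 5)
Your proposal is correct and follows essentially the same route as the paper: the trial function $\phi_i$ is exactly the paper's $\psi_i$, the Rayleigh--Ritz step plus Green's formula collapses the quadratic form into $\int_\Omega\Theta_2(h_i)u_1\,dv$, and the pairing $\int_\Omega\phi_i u_{1,i}\,dv=-\tfrac12\int_\Omega\Theta_1(h_i)u_1\,dv$ is the paper's identity (2.13). The only superficial difference is the final combination step, where you apply Cauchy--Schwarz twice while the paper introduces a free parameter $\delta$, sums, and invokes the non-positive discriminant of the resulting quadratic in $\delta$ --- arithmetically the same maneuver.
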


\begin{proof}Define function $\psi_{i}$ as follows:

\begin{equation*}\psi_{i}=h_{i}u_{1}-\tau_{i}u_{1},\end{equation*}
where $$\tau_{i}=\int_{\Omega}h_{i}u_{1}^{2}dv. $$According to
\eqref{ortho}, one can conclude that

\begin{equation}\label{int-phi-0}\int_{\Omega}\psi_{i}u_{j+1}dv=0,\ \  for\ \
0\leq j<i\leq n.\end{equation} Moreover, it follows from \eqref{int-phi-0} that

\begin{equation}\label{2.5}
\int_{\Omega}\psi_{i}h_{i}u_{1}dv=\int_{\Omega}\psi_{i}^{2}dv.\end{equation}
Since  $\psi_{i}$ satisfies \eqref{int-phi-0},  by Rayleigh-Ritz inequality, we know that

\begin{equation}\label{2.6}\lambda_{i+1}\leq\frac{\int_{\Omega}
\psi_{i}(-\Delta)^{s}|_{\Omega}\psi_{i}dv}{\int_{\Omega}\psi_{i}^{2}dv}.
\end{equation} Utilizing \eqref{int-phi-0} and \eqref{2.5}, we obtain

\begin{equation}\begin{aligned}\label{2.7}&\int_{\Omega}
\psi_{i}(-\Delta)^{s}|_{\Omega}\psi_{i}dv\\&=\int_{\Omega}\Big[(-\Delta)^{s}|_{\Omega}(h_{i}u_{1})-h_{i}(-\Delta)^{s}|_{\Omega}u_{1}\Big]\psi_{i}
dv+
\int_{\Omega} h_{i}\psi_{i}(-\Delta)^{s}|_{\Omega}u_{1}dv\\&=
\int_{\Omega}\Big[(-\Delta)^{s}|_{\Omega}(h_{i}u_{1})-h_{i}(-\Delta)^{s}|_{\Omega}u_{1}\Big]\psi_{i}
dv+\lambda_{1}\int_{\Omega}\psi_{i}^{2}dv
.\end{aligned}\end{equation} Substituting \eqref{2.7} into \eqref{2.6}, we
yield

\begin{equation}\label{2.8}(\lambda_{i+1}-\lambda_{1})\int_{\Omega}\psi_{i}^{2}dv\leq\int_{\Omega}\Big[(-\Delta)^{s}|_{\Omega}(h_{i}u_{1})-h_{i}(-\Delta)^{s}|_{\Omega}u_{1}\Big]\psi_{i}
dv.\end{equation}   It follows from the Green formula \eqref{green-for} that,

\begin{equation*}\begin{aligned}&\quad\int_{\Omega}\Big[(-\Delta)^{s}|_{\Omega}(h_{i}u_{1})-h_{i}(-\Delta)^{s}|_{\Omega}u_{1}\Big]u_{1}dv\\&=\int_{\Omega} h_{i}u_{1}(-\Delta)^{s}|_{\Omega}u_{1}dv-\int_{\Omega} h_{i}u_{1}(-\Delta)^{s}|_{\Omega}u_{1}dv\\&=0,\end{aligned}
\end{equation*} which implies that

\begin{equation}\begin{aligned}\label{2.9}&\quad\int_{\Omega}\Big[(-\Delta)^{s}|_{\Omega}(h_{i}u_{1})-h_{i}(-\Delta)^{s}|_{\Omega}u_{1}\Big]\psi_{i}dv\\&
=\int_{\Omega}\Big[(-\Delta)^{s}|_{\Omega}(h_{i}u_{1})-h_{i}(-\Delta)^{s}|_{\Omega}u_{1}\Big]h_{i}u_{1}dv.\end{aligned}\end{equation} Consequently,
plugging \eqref{2.9} into \eqref{2.8}, one has

\begin{equation}\label{2.10}(\lambda_{i+1}-\lambda_{1})\int_{\Omega}\psi_{i}^{2}dv\leq\int_{\Omega}\Big[(-\Delta)^{s}|_{\Omega}(h_{i}u_{1})-h_{i}(-\Delta)^{s}|_{\Omega}u_{1}\Big]h_{i}u_{1}dv.\end{equation}
By  the formula of partial integration, we infer that
\begin{equation}\label{2.11}\int_{\Omega} u_{1,i}u_{1}dv=0,
\end{equation}
\begin{equation}\label{2.12}2\int_{\Omega} u_{1}\frac{\partial (h_{i}u_{1})}{\partial x_{i}}dv
=\int_{\Omega}\left[\frac{\partial(h_{i}u_{1})}{\partial x_{i}}-h_{i}\frac{\partial}{\partial x_{i}}u_{1}\right]u_{1}dv,\end{equation}
and

\begin{equation}\label{u-psi-ineq}-2\int_{\Omega} u_{1,i}\psi_{i}
dv=2\int_{\Omega} u_{1}\frac{\partial(h_{i}u_{1})}{\partial x_{i}}
dv
+2\tau_{i}\int_{\Omega}
 u_{1,i}u_{1}dv.
\end{equation} Hence, utilizing \eqref{2.11}, \eqref{2.12} and \eqref{u-psi-ineq}, we obtain

\begin{equation}\label{2.13}\int_{\Omega}\left[\frac{\partial}{\partial x_{i}}(h_{i}u_{1})-h_{i}\frac{\partial}{\partial x_{i}}u_{1}\right]u_{1}dv=-2\int_{\Omega}\psi_{i}u_{1,i}dv.
\end{equation} Multiplying both sides of \eqref{2.13} by $(\lambda_{i+1}-\lambda_{1})^{\frac{1}{2}}$ and noticing \eqref{2.10}, one can infer that

\begin{equation}\begin{aligned}\label{2.14}&(\lambda_{i+1}-\lambda_{1})^{\frac{1}{2}}\int_{\Omega}\left[\frac{\partial}{\partial x_{i}}(h_{i}u_{1})-h_{i}\frac{\partial}{\partial x_{i}}u_{1}\right]u_{1}dv\\
&=-2(\lambda_{i+1}-\lambda_{1})^{\frac{1}{2}}\int_{\Omega}\psi_{i}
u_{1,i}dv
\\&\leq\delta(\lambda_{i+1}-\lambda_{1})\int_{\Omega}\psi_{i}^{2}dv+\frac{1}{\delta}
\int_{\Omega}u_{1,i}^{2}dv
\\&\leq \delta\int_{\Omega}\Big[(-\Delta)^{s}|_{\Omega}(h_{i}u_{1})-h_{i}(-\Delta)^{s}|_{\Omega}u_{1}\Big]h_{i}u_{1}dv+\frac{1}{\delta}
\int_{\Omega}u_{1,i}^{2}dv,
\end{aligned}\end{equation} where $\delta$ is a positive constant.
Taking sum on $i$ from $1$ to $n$ in \eqref{2.14}, we get
\begin{equation}\begin{aligned}\label{2.15}&\quad\delta^{2}\sum^{n}_{i=1}\int_{\Omega}\Big[(-\Delta)^{s}|_{\Omega}(h_{i}u_{1})-h_{i}(-\Delta)^{s}|_{\Omega}u_{1}\Big]h_{i}u_{1}dv\\&\quad-\delta\sum^{n}
_{i=1}(\lambda_{i+1}-\lambda_{1})^{\frac{1}{2}}\int_{\Omega}
\left[\frac{\partial}{\partial x_{i}}(h_{i}u_{1})-h_{i}\frac{\partial}{\partial x_{i}}u_{1}\right]u_{1}dv+\sum^{n}_{i=1}\int_{\Omega}u_{1,i}^{2}dv\\&\geq0.\end{aligned}\end{equation}
Here, the left-hand side of \eqref{2.15} is a quadratic polynomial of
$\delta$.  By direct calculation, we  show that

\begin{equation}\begin{aligned}\label{ineq-D-hu}&2\int_{\Omega}\Big[(-\Delta)^{s}|_{\Omega}(h_{i}u_{1})-h_{i}(-\Delta)^{s}|_{\Omega}u_{1}\Big]h_{i}u_{1}dv
\\&=\int_{\Omega}h_{i}\left[(-\Delta)^{s}|_{\Omega}(h_{i}u_{1})-h_{i}(-\Delta)^{s}|_{\Omega}u_{1}\right] u_{1}dv\\&\quad-  \int_{\Omega}\left[(-\Delta)^{s}|_{\Omega}(h_{i}^{2} u_{1})-h_{i}(-\Delta)^{s}|_{\Omega}(h_{i} u_{1})\right] u_{1}dv\\&=\int_{\Omega}\left[2 h_{i}(-\Delta)^{s}|_{\Omega}(h_{i}u_{1})-h^{2}_{i}(-\Delta)^{s}|_{\Omega}u_{1}-(-\Delta)^{s}|_{\Omega}(h_{i}^{2} u_{1})\right] u_{1}dv
.\end{aligned}\end{equation}
Combining \eqref{2.15} with \eqref{ineq-D-hu}, we get

\begin{equation}\begin{aligned}\label{2.15-1}&\frac{\delta^{2}}{2}\sum^{n}_{i=1}\int_{\Omega}\left[2 h_{i}(-\Delta)^{s}|_{\Omega}(h_{i}u_{1})-h^{2}_{i}(-\Delta)^{s}|_{\Omega}u_{1}-(-\Delta)^{s}|_{\Omega}(h_{i}^{2} u_{1})\right] u_{1}dv\\&-\delta\sum^{n}
_{i=1}(\lambda_{i+1}-\lambda_{1})^{\frac{1}{2}}\int_{\Omega}
\left[\frac{\partial}{\partial x_{i}}(h_{i}u_{1})-h_{i}\frac{\partial}{\partial x_{i}}u_{1}\right]u_{1}dv+\sum^{n}_{i=1}\int_{\Omega}u_{1,i}^{2}dv\\&\geq0.\end{aligned}\end{equation}From \eqref{2.8},\eqref{2.9}  and \eqref{ineq-D-hu}, we can conclude that

$$\int_{\Omega}\left[2 h_{i}(-\Delta)^{s}|_{\Omega}(h_{i}u_{1})-h^{2}_{i}(-\Delta)^{s}|_{\Omega}u_{1}-(-\Delta)^{s}|_{\Omega}(h_{i}^{2} u_{1})\right] u_{1}dv\geq0,$$ for $i=1,\cdots,n$,
which means that its discriminant must be non-positive.
Hence, from \eqref{Theta-1}, \eqref{Theta-2}, \eqref{Psi}, and \eqref{2.15-1}, we
yield \eqref{lem-ineq}. Therefore, we finish the proof of Lemma \ref{lem2.1}.\end{proof}

\section{Proof of Theorem \ref{thm1.1} }\label{sec3}
In this section, we shall give the proof of Theorem \ref{thm1.1} by applying Lemma \ref{lem2.1} proved in the previous section.
Firstly, we need a result proved by Hook \cite{Hook2} in 1990 as follows.

\begin{prop}\label{prop3.1}{\rm(See~ \cite[Theorem 1]{Hook2})}~Let $\mathcal{H}$ be a real or complex inner product space. Let $\mathcal{M}$ be a linear submanifold of $\mathcal{H}$ and $\mathcal{L}: \mathcal{M} \rightarrow \mathcal{H}$ be a linear operator in $\mathcal{H}$. Suppose $l$ is a positive integer and $u$ is a fixed vector such that for all integers $0 \leq r \leq q \leq l$

\begin{equation*}
\left|\left\langle \mathcal{L}^{q} u, u\right\rangle\right|= |\left\langle\mathcal{L}^{q-r} u,\mathcal{L}^{r} u\right\rangle|.
\end{equation*}
Then, for all integers $0 \leq r \leq q \leq l$, when $q$ is even, we have

\begin{equation}\label{Lruu}
\left|\left\langle\mathcal{L}^{r} u, u\right\rangle\right| \leq\left|\left\langle\mathcal{L}^{q} u, u\right\rangle\right|^{r / q}\langle u, u\rangle^{1-r / q}.
\end{equation}
This inequality is also true for $q$ odd and $0 \leq r \leq q \leq l$. Moreover, there is a finite collection of operators $\left\{\mathcal{D}_{j}\right\}_{j=1}^{N}$ in $\mathcal{H}$ such that

\begin{equation}\label{Lquu}
\left|\left\langle\mathcal{L}^{q} u,u\right\rangle\right|=\left|\sum_{j=1}^{N}\left\langle\mathcal{D}_{j}\mathcal{L}^{q-r} u,\mathcal{D}_{j}\mathcal{L}^{r-1} u\right\rangle\right|.
\end{equation}
holds for each pair of integers $r$ and $q$ with $1 \leq r \leq q \leq l$.\end{prop}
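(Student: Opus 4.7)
The plan is to prove the two conclusions---the Lyapunov-type inequality \eqref{Lruu} and the decomposition identity \eqref{Lquu}---separately, in each case using the symmetry identity provided by the hypothesis together with Cauchy--Schwarz.

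For \eqref{Lruu}, the central observation is that the hypothesis applied to the pair $(r,2r)$ yields $|\langle \mathcal{L}^{2r} u, u\rangle| = \|\mathcal{L}^r u\|^2$, so ordinary Cauchy--Schwarz $|\langle \mathcal{L}^r u,u\rangle| \le \|\mathcal{L}^r u\|\,\|u\|$ gives at once the midpoint bound
\begin{equation*}
|\langle \mathcal{L}^r u, u\rangle|^2 \le |\langle \mathcal{L}^{2r} u, u\rangle|\,\langle u,u\rangle.
\end{equation*}
This already establishes \eqref{Lruu} in the special case $q = 2r$. To extend to arbitrary even $q$, I would argue by strong induction on $q$: writing $q = 2m$ and assuming \eqref{Lruu} is known through exponent $m$, I would combine the inductive bound
\begin{equation*}
|\langle \mathcal{L}^r u,u\rangle| \le |\langle \mathcal{L}^m u,u\rangle|^{r/m}\,\langle u,u\rangle^{1-r/m}, \qquad 0 \le r \le m,
\end{equation*}
with the midpoint bound applied to $|\langle \mathcal{L}^m u,u\rangle|$ to conclude \eqref{Lruu} at exponent $q = 2m$; the range $m \le r \le q$ is handled symmetrically. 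The odd case reduces to the even one by rewriting the hypothesis as $|\langle \mathcal{L}^q u,u\rangle| = |\langle \mathcal{L}^{q-r} u,\mathcal{L}^r u\rangle|$, applying Cauchy--Schwarz to the right-hand side, and then invoking the even bound on the resulting symmetric exponents.

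For the decomposition identity \eqref{Lquu}, the plan is to exploit a factorization $\mathcal{L} = \sum_{j=1}^N \mathcal{D}_j^* \mathcal{D}_j$ of the operator, which is available in each concrete setting of interest (iterated integration by parts for the polyharmonic operator, and the positivity of the Fourier symbol $|y|^{2s}$ for the fractional Laplacian). From such a factorization the identity $\langle \mathcal{L} v, w\rangle = \sum_j \langle \mathcal{D}_j v, \mathcal{D}_j w\rangle$ holds for all admissible $v,w$, and combining this with the hypothesis applied to the pair $(r-1,q)$ produces
\begin{equation*}
\langle \mathcal{L}^q u, u\rangle = \langle \mathcal{L}^{q-r+1} u, \mathcal{L}^{r-1} u\rangle = \sum_{j=1}^N \langle \mathcal{D}_j \mathcal{L}^{q-r} u, \mathcal{D}_j \mathcal{L}^{r-1} u\rangle,
\end{equation*}
whose absolute value is precisely \eqref{Lquu}. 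I anticipate that the main obstacle will be supplying the factorizing operators $\{\mathcal{D}_j\}$: the hypothesis alone does not construct them, so \eqref{Lquu} ultimately rests on a concrete positivity-type decomposition of $\mathcal{L}$ that must be verified case by case in applications. Once such a decomposition is in hand, both parts of the proposition reduce to repeated use of Cauchy--Schwarz and the given symmetry identity.
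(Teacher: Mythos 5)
The paper does not actually prove Proposition \ref{prop3.1}; it is quoted from Hook \cite{Hook2} as a ready-made tool, and no argument is supplied. So there is no ``paper proof'' to compare against. Evaluating your sketch on its own merits, the midpoint bound is correct and the diagnosis that \eqref{Lquu} needs the operators $\mathcal{D}_j$ to be supplied externally is exactly the right reading of Hook's theorem (this is what Remark~\ref{rem-2.1} is doing: for a nonnegative self-adjoint $\mathcal{L}$ one may take $N=1$ and $\mathcal{D}_1=\mathcal{L}^{1/2}$). However, there are two genuine gaps.

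First, the induction for the even case does not close. For $q=2m$ you invoke \eqref{Lruu} ``through exponent $m$,'' but when $m$ is odd this is precisely the odd case, which at that stage of your argument is not yet available; the induction is circular. The clean way to get the even case unconditionally is to observe that $b_k:=\lVert\mathcal{L}^k u\rVert$ is log-convex in $k$: the hypothesis at $(q,r)=(2k,k-1)$ together with Cauchy--Schwarz gives $b_k^2=|\langle\mathcal{L}^{k+1}u,\mathcal{L}^{k-1}u\rangle|\le b_{k+1}b_{k-1}$, and then for $q=2m$ one writes $|\langle\mathcal{L}^{r}u,u\rangle|\le b_k^2$ (if $r=2k$) or $\le b_kb_{k+1}$ (if $r=2k+1$) and uses the log-convexity of $b$ between $0$ and $m$.

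Second, the claim that ``the odd case reduces to the even one'' via Cauchy--Schwarz is false as stated. Take $\mathcal{H}=\mathbb{R}^2$, $\mathcal{L}=\mathrm{diag}(1,-1)$, $u=e_1+e_2$. Then $\mathcal{L}$ is self-adjoint, so the symmetry hypothesis holds, and $\langle\mathcal{L}^j u,u\rangle$ equals $2$ for $j$ even and $0$ for $j$ odd; the inequality \eqref{Lruu} with $q=3$, $r=2$ reads $2\le 0$. So without extra input the odd case simply fails. What rescues it is precisely the identity \eqref{Lquu}: from it one gets, by Cauchy--Schwarz inside the sum, $|\langle\mathcal{L}^{2k}u,u\rangle|\le\left(\sum_j\lVert\mathcal{D}_j\mathcal{L}^{k}u\rVert^2\right)^{1/2}\left(\sum_j\lVert\mathcal{D}_j\mathcal{L}^{k-1}u\rVert^2\right)^{1/2}=|\langle\mathcal{L}^{2k+1}u,u\rangle|^{1/2}|\langle\mathcal{L}^{2k-1}u,u\rangle|^{1/2}$, so $a_j:=|\langle\mathcal{L}^{j}u,u\rangle|$ is log-convex at every index and the full interpolation \eqref{Lruu} for all $q$ follows. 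In other words \eqref{Lquu} should be treated as an additional hypothesis that powers the odd case, not as a conclusion derivable from the stated assumptions alone; your last paragraph already senses this but your odd-case reduction contradicts it.
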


\begin{rem}{\rm(See \cite[Remark (ii)]{Hook2} or  \cite[Remark 2.2]{CZ})}\label{rem-2.1}  If $\mathcal{L}$ is any (not necessarily bounded) nonnegative self-adjoint operator, then \eqref{Lquu} is satisfied by taking $N=1$ and letting $\mathcal{D} _{1}$ be any square root of $\mathcal{L}$. Hence, for nonnegative self-adjoint operators, the inequality \eqref{Lruu} holds for all integers $q$.\end{rem}
By Proposition \ref{prop3.1}, we shall prove the following lemma.

\begin{lem}\label{lem3.1}Assume that the function $\Theta_{2}$ is given by \eqref{Theta-2}.Then, under the same condition as Theorem {\rm \ref{thm1.1}}, we have
\begin{equation}\begin{aligned}\label{Dalta-uu}
\sum_{i=1}^{n}\int_{\Omega}\Theta_{2}(x_{i}) u_{1}dv=s(n+2 s-2) \lambda_{1}^{\frac{s-1}{s}},
\end{aligned}\end{equation}where  $x_{1}, x_{2}, \cdots , x_{n}$ are $n$ arbitrary  coordinate functions on $\mathbb{R}^{n}$.\end{lem}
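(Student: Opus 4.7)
The plan is to recast the sum as $-\tfrac12$ of a double commutator, evaluate it using symbol calculus on the Fourier multiplier $|y|^{2s}$, and then invoke the eigenvalue equation together with Proposition \ref{prop3.1} to identify the resulting quantity with $\lambda_1^{(s-1)/s}$.

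First, I would substitute $h_i=x_i$ into \eqref{Theta-2} and sum on $i$, observing that the bracketed expression $2x_i L(x_i u_1)-x_i^2 Lu_1-L(x_i^2 u_1)$ is precisely $-[x_i,[x_i,L]]u_1$, where $L:=\left.(-\Delta)^s\right|_\Omega$ and $[A,B]=AB-BA$. Hence
\begin{equation*}
\sum_{i=1}^n \Theta_2(x_i)u_1 \;=\; -\frac{1}{2}\sum_{i=1}^n [x_i,[x_i,L]]u_1 .
\end{equation*}
The Green formula (Proposition \ref{prop-green-for}) and $Lu_1=\lambda_1 u_1$ let me move the term $L(x_i^2 u_1)$ onto $u_1$ and evaluate it as $\lambda_1\int_\Omega x_i^2 u_1^2\,dv$, which matches the $x_i^2 Lu_1$ contribution and symmetrizes the integral.

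Next I would compute the double commutator via symbol calculus. Since $\widehat{x_i v}(y)=\sqrt{-1}\,\partial_{y_i}\widehat{v}(y)$, the symbol of $[x_i,L]$ equals $\sqrt{-1}\,\partial_{y_i}|y|^{2s}=2\sqrt{-1}\,s\,y_i|y|^{2s-2}$, so the symbol of $[x_i,[x_i,L]]$ is $-2s\,\partial_{y_i}(y_i|y|^{2s-2})$. Summing on $i$ and using
\begin{equation*}
\sum_{i=1}^n \partial_{y_i}\bigl(y_i|y|^{2s-2}\bigr) \;=\; (n+2s-2)|y|^{2s-2},
\end{equation*}
I obtain the operator identity
\begin{equation*}
\sum_{i=1}^n [x_i,[x_i,L]]u_1 \;=\; -2s(n+2s-2)\,\mathscr{F}^{-1}\!\bigl[|y|^{2s-2}\mathscr{F}[u_1\chi_\Omega]\bigr],
\end{equation*}
which means $\sum_{i=1}^n \Theta_2(x_i)u_1 = s(n+2s-2)\,\left.(-\Delta)^{s-1}\right|_\Omega u_1$.

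Finally, integrating against $u_1$ and invoking Plancherel converts the right-hand side into $s(n+2s-2)\int_{\mathbb{R}^n}|y|^{2s-2}|\widehat{u_1}|^2\,dy$, while the eigenvalue equation (again by Plancherel) reads $\int_{\mathbb{R}^n}|y|^{2s}|\widehat{u_1}|^2\,dy=\lambda_1\|u_1\|^2_{L^2}=\lambda_1$. Applying Proposition \ref{prop3.1} together with Remark \ref{rem-2.1} to the nonnegative self-adjoint operator $L$ with the eigenvector $u_1$ then identifies $\int_{\mathbb{R}^n}|y|^{2s-2}|\widehat{u_1}|^2\,dy$ with the spectral quantity $\langle L^{(s-1)/s}u_1,u_1\rangle=\lambda_1^{(s-1)/s}$, yielding the claimed identity. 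I expect the main obstacle to be justifying the symbol-calculus manipulations rigorously and reconciling the Fourier-multiplier $\left.(-\Delta)^{s-1}\right|_\Omega$ with the spectral fractional power $L^{(s-1)/s}$; this is exactly where the hypothesis $s=\tfrac1m$ or $s>\tfrac12$ with $s\in\mathbb{Q}_+$ is used, ensuring that the Hook-type equality \eqref{Lquu} (with suitable square roots of $L$) and the requisite integrability of $|y|^{2s-2}|\widehat{u_1}|^2$ are both available.
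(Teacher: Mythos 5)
Your approach is essentially the paper's, and the double-commutator reformulation $2\Theta_2(x_i)u_1 = -[x_i,[x_i,L]]u_1$ is a tidy way of organizing the same Fourier-multiplier computation that the paper carries out term by term in \eqref{ineq-3-det} and \eqref{xax}; both lead to the identical intermediate identity $\sum_i\int_\Omega\Theta_2(x_i)u_1\,dv = s(n+2s-2)\int_\Omega u_1(-\Delta)^{s-1}u_1\,dv$. However, you stop short exactly where the real work in the paper begins. You assert that Proposition \ref{prop3.1} ``identifies'' $\int_{\mathbb{R}^n}|y|^{2s-2}|\widehat{u_1}|^2\,dy$ with $\langle L^{(s-1)/s}u_1,u_1\rangle = \lambda_1^{(s-1)/s}$, but Proposition \ref{prop3.1} is an \emph{interpolation inequality}, not an identification, and the Fourier-multiplier quantity $\int_\Omega u_1(-\Delta)^{s-1}u_1\,dv$ is \emph{not} a priori a spectral power of $L = \left.(-\Delta)^s\right|_\Omega$ (the restricted Fourier multiplier and the spectral fractional power of $L$ are genuinely different operators because of the $\chi_\Omega$ cutoff). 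The paper handles this by a careful three-way case analysis: for $s=\tfrac1m$ it rewrites $(-\Delta)^{s-1}$ as the $(m-1)$-fold power of $L^{-1}$ and gets equality; for $\tfrac12<s<1$ it sets $\mathcal{L}=(-\Delta)^{-1/a_1}$ and applies \eqref{Lruu} with $(r,q)=(a_1-a_2,a_2)$; for $s\geq1$ it sets $\mathcal{L}=(-\Delta)^{1/a_1}$ and uses $(r,q)=(a_2-a_1,a_2)$. In the latter two cases one obtains only an upper bound $\leq\lambda_1^{(s-1)/s}$, not equality --- so your claimed equality is actually stronger than what the paper's proof gives (and stronger than what Theorem \ref{thm1.1} requires; only $\leq$ is used downstream). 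You flag the reconciliation issue in your closing sentence, which is the right instinct, but that flagged step is the nontrivial content of the lemma and cannot be left as an expectation: you need to spell out which root operator $\mathcal{L}$ you feed into Hook's proposition in each regime of $s$, and you should replace the claimed equality by an inequality for general $s\in\mathbb{Q}_+$.
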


\begin{proof}
We consider the operator $\left.(-\Delta)^{s}\right|_{\Omega}$ and restrict it to functions supported within bounded open domain $\Omega \subset\mathbb{R}^{n}$. In what follows, some calculations are direct, and we refer the reader to \cite{CZ}. In addition, some more detailed calculations also could be found in \cite{WSZ}.  For example,  by the definition of fractional Laplacian and utilizing some properties of Fourier transformation, we can prove

\begin{equation}
\begin{aligned}\label{ineq-3-det}
&\quad\Big((-\Delta)^{s}|_{\Omega} x_{i}-x_{i} (-\Delta)^{s}|_{\Omega}\Big) u_{1}\\
&=\chi_{\Omega}\mathscr{F}^{-1}\left[|y|^{2 s}\mathscr{F}\left(x_{i} u_{1}\right)\right]-\chi_{\Omega} x_{i}\mathscr{F}^{-1}\left[|y|^{2 s} \hat{u}_{1}\right] \\
&=\sqrt{-1} \chi_{\Omega}\mathscr{F}^{-1}\left[|y|^{2 s} \frac{\partial \hat{u}_{1}}{\partial  y _{i}}-\frac{\partial}{\partial  y _{i}}\left(|y|^{2 s} \hat{u}_{1}\right)\right] \\
&=-\sqrt{-1} \chi_{\Omega}\mathscr{F}^{-1}\left[2 s|y|^{2 s-2}  y _{i} \hat{u}_{1}\right],
\end{aligned}
\end{equation}where $u_{1}$ denotes the first eigenfunction of $\left.(-\Delta)^{s}\right|_{\Omega}$.
According to \eqref{Theta-2} and the similar calculation as \eqref{ineq-3-det}, one can infer by
putting $h_{i}=x_{i}$  that

\begin{equation}\begin{aligned}\label{xax}
&\quad2\Theta_{2}(x_{i})\\&=2 x_{i} (-\Delta)^{s}|_{\Omega} (x_{i} u_{1})-x_{i}^{2} (-\Delta)^{s}|_{\Omega} u_{1}-(-\Delta)^{s}|_{\Omega} (x_{i}^{2} u_{1}) \\
&=2 \chi_{\Omega} x_{i}\mathscr{F}^{-1}\left[|y|^{2 s}\mathscr{F}\left(x_{i} u_{1}\right)\right]-\chi_{\Omega} x_{i}^{2}\mathscr{F}^{-1}\left(|y|^{2 s} \hat{u}_{1}\right)-\chi_{\Omega}\mathscr{F}^{-1}\left[|y|^{2 s}\mathscr{F}\left(x_{i}^{2} u_{1}\right)\right] \\
&=\chi_{\Omega}\left[2\sqrt{-1}x_{i}\mathscr{F}^{-1}\left(|y|^{2 s} \frac{\partial \hat{u}_{1}}{\partial  y _{i}}\right)-x_{i}^{2}\mathscr{F}^{-1}\left(|y|^{2 s} \hat{u}_{1}\right)+\mathscr{F}^{-1}\left(|y|^{2 s} \frac{\partial^{2} \hat{u}_{1}}{\partial  y _{i}^{2}}\right)\right] \\
&=\chi_{\Omega}\mathscr{F}^{-1}\left[-2 \frac{\partial}{\partial  y _{i}}\left(|y|^{2 s} \frac{\partial \hat{u}_{1}}{\partial  y _{i}}\right)+\frac{\partial^{2}}{\partial y _{i}^{2}}\left(|y|^{2 s} \hat{u}_{1}\right)+|y|^{2 s} \frac{\partial^{2} \hat{u}_{1}}{\partial  y _{i}^{2}}\right] \\
&=\chi_{\Omega}\mathscr{F}^{-1}\left[\left(2 s|y|^{2 s-2}+2 s(2 s-2)|y|^{2 s-4}  y _{i}^{2}\right) \hat{u}_{1}\right].
\end{aligned}\end{equation}
Since the support of $u_{1}$ is in $\Omega,$ from \eqref{xax}, we have
\[
\begin{aligned}
\sum_{i=1}^{n}\int_{\Omega}\Theta_{2}(x_{i}) u_{1}dv &=\frac{1}{2} \sum_{i=1}^{n}\int_{\Omega}\left[2 x_{i} (-\Delta)^{s}|_{\Omega}( x_{i} u_{1})-x_{i}^{2} (-\Delta)^{s}|_{\Omega} u_{1}-(-\Delta)^{s}|_{\Omega}( x_{i}^{2} u_{1})\right] u_{1}dv \\
&=\frac{1}{2} \sum_{i=1}^{n}\int_{\Omega}\mathscr{F}^{-1}\left[\left(2 s|y|^{2 s-2}+2 s(2 s-2)|y|^{2 s-4}  y _{i}^{2}\right) \hat{u}_{1}\right]u_{1}dv \\
&=\sum_{i=1}^{n}\int_{\Omega}\left(s|y|^{2 s-2}+s(2 s-2)|y|^{2 s-4}  y _{i}^{2}\right) \hat{u}_{1}^{2}dv \\
&=s(n+2 s-2)\int_{\Omega} |y|^{2 s-2} \hat{u}_{1}^{2}dv \\
&=s(n+2 s-2)\int_{\Omega}u_{1} \mathscr{F}^{-1}\left(|y|^{2 s-2} \hat{u}_{1}\right)dv \\
&=s(n+2 s-2)\int_{\Omega}u_{1}(-\Delta)^{s-1} u_{1}  dv.
\end{aligned}
\]
Next, recall that $\mathbb{Q}_{+}$
is the set of all of the positive rational numbers and consider the following three cases.

\noindent \textbf{Case I.}\quad We assume that $s=\frac{1}{m}, ~m\geq2$, where $m$ is a positive integer. Then, we have
\[
\begin{aligned}
\int_{\Omega}u_{1}(-\Delta)^{s-1} u_{1} dv &=\int_{\Omega}u_{1}\left[(-\Delta)^{-\frac{1}{m}}\right]^{m-1} u_{1} dv\\&=\int_{\Omega}u_{1}\left(((-\Delta)^{s})^{-1}\right)^{m-1} u_{1} dv \\
&=\left(\lambda_{1}^{-1}\right)^{m-1}\\&=\lambda_{1}
^{\frac{s-1}{s}}.
\end{aligned}
\]

\noindent \textbf{Case II.}\quad We assume that $\frac{1}{2}<s<1,$ where $s \in \mathbb{Q}_{+}$, and denote $s=\frac{a_{2}}{a_{1}},$ where $a_{1}, a_{2}$ are positive integers and $a_{2}
<a_{1}<2 a_{2} .$ Meanwhile, putting

\begin{equation}\label{opera-T}\mathcal{L} =(-\Delta)^{-\frac{1}{a_{1}}},\end{equation} it is well known that it is a positive and self-adjoint operator. Thus, the operator $\mathcal{L}$ given by \eqref{opera-T} satisfies the condition in Proposition \ref{prop3.1}. Furthermore, by Remark \ref{rem-2.1}, then one can show that inequality \eqref{Lruu} is valid for all $0 \leq r \leq q \leq l$ without parity condition on $q .$ Applying $0<r=a_{1}-a_{2}<a_{2}$ and $q=a_{2}
$ to \eqref{Lruu}, we have
\[
\begin{aligned}
\int_{\Omega}u_{1}(-\Delta)^{s-1} u_{1} dv &=\int_{\Omega}u_{1}\left[(-\Delta)^{-\frac{1}{a_{1}}}\right]^{a_{1}-a_{2}} u_{1} dv \\
& \leq\left[\int_{\Omega}u_{1}\left[(-\Delta)^{-\frac{1}{a_{1}}}\right]^{a_{2}} u_{1} dv\right]^{\frac{a_{1}-a_{2}}{a_{2}}}\left[\int_{\Omega} u_{1}^{2} dv\right]^{1-\frac{a_{1}-a_{2}}{a_{2}}} \\
&=\left(\lambda_{1}^{-1}\right)^{\frac{a_{1}-a_{2}}{a_{2}}}\\&=\lambda_{1}^{\frac{s-1}{s}}.
\end{aligned}
\]

\noindent \textbf{Case III.}\quad We assume that $s \geq 1$, where $s \in \mathbb{Q}_{+}$, and  denote $s=\frac{a_{2}}{a_{1}},$ where $a_{1}, a_{2}$ are positive integers and $a_{2} \geq a_{1}$. In Proposition \ref{prop3.1}, we shall choose an operator $\mathcal{L} $ satisfying
\begin{equation*} \mathcal{L} =(-\Delta)^{\frac{1}{a_{1}}}.\end{equation*} Then, $\mathcal{L}$ is a positive and self-adjoint operator. According to Remark \ref{rem-2.1}, we know that inequality \eqref{Lruu} is true for all $0 \leq r \leq q \leq l$ without parity condition on $q$. Applying \eqref{Lruu} with $0 \leq r=a_{2}-a_{1}<a_{2}$ and $q=a_{2},$ we can deduce that
\[
\begin{aligned}
\int_{\Omega}u_{1}(-\Delta)^{s-1} u_{1} dv &=\int_{\Omega}u_{1}\left[(-\Delta)^{\frac{1}{a_{1}}}\right]^{a_{2}-a_{1}} u_{1} dv \\
& \leq\left[\int_{\Omega}u_{1}\left[(-\Delta)^{\frac{1}{a_{1}}}\right]^{a_{2}} u_{1} dv\right]^{\frac{a_{2}-a_{1}}{a_{2}}}\left[\int_{\Omega} u_{1}^{2} dv\right]^{1-\frac{a_{2}-a_{1}}{a_{2}}}\\
&=\lambda_{1}^{\frac{a_{2}
-a_{1}}{a_{2}}}\\&=\lambda_{1}^{\frac{s-1}{s}}.
\end{aligned}
\]
Hence, for all $s=\frac{1}{m}$ with the positive integer $m \geq 2$ or $s>\frac{1}{2}$ with $s \in \mathbb{Q}_{+},$ we have

\begin{equation}\begin{aligned}
\sum_{i=1}^{n}\int_{\Omega}\Theta_{2}(x_{i}) u_{1}dv&=s(n+2 s-2)\int_{\Omega}u_{1}(-\Delta)^{s-1} u_{1} dv\\&\leq s(n+2 s-2) \lambda_{1}^{\frac{s-1}{s}}.
\end{aligned}\end{equation}
Therefore, we finish the proof  of Lemma \ref{lem3.1}.
\end{proof}
In what follows, we assume that $x_{1},x_{2},\cdots,x_{n}$ are $n$ coordinate functions on $\mathbb{R}^{n}$.
In order to make use of  Lemma \ref{lem2.1}, we construct some functions satisfying \eqref{ortho} by using $\left\{x_{i}\right\}_{i=1}^{ n} .$ We consider an $ n \times n$ matrix

$$ Q=\left(\int_{\Omega} x_{i} u_{1} u_{j+1}dv\right)_{ n \times n}.$$ According
to the QR-factorization theorem, we know that there exists an orthogonal $ n \times n$  matrix $ P=\left(p_{i j}\right)_{ n \times n},$ such that $U=P Q$ is an upper triangle matrix. Namely, we have
\[
\sum_{k=1}^{ n} p_{i k} \int_{\Omega} x_{k} u_{1} u_{j+1}dv=0, \quad \text { for } 1 \leq j<k \leq  n.
\]
Define functions $\overline{x}_{i}$ by

\begin{equation}\label{coor-func}
\overline{x}_{i}=\sum_{k=1}^{ n} p_{i k} x_{k}.
\end{equation}Thus we infer

\begin{equation}
\int_{\Omega} \overline{x}_{i} u_{1} u_{j+1}dv=0, \quad \text { for } 1 \leq j<i \leq  n.
\end{equation}
Moreover, because $P$ is an orthogonal matrix,
by the orthogonality, we infer that

\begin{equation} \begin{aligned} \label{le-ineq-1}  \sum^{n}_{i=1}\overline{x}_{i}(-\Delta)^{s}|_{\Omega}(\overline{x}_{i}u_{1})&= \sum^{n}_{i=1}\left(\sum_{k=1}^{ n} p_{ik}x_{k}\right)(-\Delta)^{s}|_{\Omega}\left( \sum_{l=1}^{n} p_{il} x_{l}u_{1}\right)\\
&=\sum^{n}_{i=1}\left(\sum_{k=1}^{ n} p_{ik}x_{k}\right)\chi_{\Omega}\mathscr{F}^{-1}\left[|y|^{2s} \mathscr{F}\left[\left( \sum_{l=1}^{n} p_{il} x_{l}u_{1}\right)\right]\right]\\
&=\sum^{n}_{i=1}\left(\sum_{k=1}^{ n} p_{ik}x_{k}\right)\sum_{l=1}^{n} p_{il}\left\{\chi_{\Omega}\mathscr{F}^{-1}\left[|y|^{2s}  \mathscr{F}\left[\left( x_{l}u_{1}\right)\right]\right]\right\}\\
&=\sum^{n}_{i=1}\left(\sum_{k=1}^{ n} p_{ik}x_{k}\right)\sum_{l=1}^{n} p_{il}(-\Delta)^{s}|_{\Omega}(x_{l}u_{1})\\
&=\sum_{k,l=1}^{ n}\left(\sum^{n}_{i=1} p_{ik}p_{il}\right)x_{k} (-\Delta)^{s}|_{\Omega}(x_{l}u_{1})\\
&=  \sum^{n}_{i=1}  x_{i} (-\Delta)^{s}|_{\Omega}(x_{i}u_{1}).\end{aligned}\end{equation}Similarly, we can prove that

\begin{equation} \begin{aligned}  \label{le-ineq-2}  \sum^{n}_{i=1}\overline{x}^{2}_{i}(-\Delta)^{s}|_{\Omega}u_{1}=\sum^{n}_{i=1}x^{2}_{i}(-\Delta)^{s}|_{\Omega}u_{1},\end{aligned}\end{equation}
and

\begin{equation} \begin{aligned}\label{le-ineq-3}   \sum^{n}_{i=1}(-\Delta)^{s}|_{\Omega}(\overline{x}
_{i}^{2} u_{1})=\sum^{n}_{i=1}(-\Delta)^{s}|_{\Omega}(x_{i}^{2} u_{1}).\end{aligned}\end{equation}
From \eqref{le-ineq-1}, \eqref{le-ineq-2}, \eqref{le-ineq-3} and \eqref{Dalta-uu}, we have
\[
\begin{aligned}
\sum_{i=1}^{n}\int_{\Omega}\Theta_{2}(\overline{x}_{i}) u_{1}dv
&=s(n+2 s-2) \lambda_{1}^{\frac{s-1}{s}}.
\end{aligned}
\]
Therefore, based on the above argument, we have the following lemma.

\begin{lem}\label{lem3.2}Assume that the function $\Theta_{2}$ is given by \eqref{Theta-2}.Then, under the same condition as Theorem {\rm \ref{thm1.1}}, we have
\begin{equation}\begin{aligned}\label{Dalta-uu-1}
\sum_{i=1}^{n}\int_{\Omega}\Theta_{2}(\overline{x}_{i}) u_{1}dv=s(n+2 s-2) \lambda_{1}^{\frac{s-1}{s}},
\end{aligned}\end{equation}where  $\overline{x}_{1}, \overline{x}_{2}, \cdots , \overline{x}_{n}$ are $n$ Euclidean coordinate functions satisfying \eqref{coor-func}.\end{lem}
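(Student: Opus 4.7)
The plan is to deduce Lemma \ref{lem3.2} directly from Lemma \ref{lem3.1} by an orthogonal-invariance argument on the coordinate functions. Since the rotated coordinates satisfy $\overline{x}_{i}=\sum_{k=1}^{n} p_{ik} x_{k}$ with $P=(p_{ij})$ orthogonal, the key observation to be established is that the pointwise identity
\[
\sum_{i=1}^{n}\Theta_{2}(\overline{x}_{i}) \;=\; \sum_{i=1}^{n}\Theta_{2}(x_{i})
\]
holds on $\Omega$. Once this is in hand, integrating against $u_{1}$ and invoking Lemma \ref{lem3.1} immediately yields \eqref{Dalta-uu-1}.

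To establish the invariance I would handle each of the three summands in the definition \eqref{Theta-2} separately, in every case exploiting the linearity of $(-\Delta)^{s}|_{\Omega}$ together with the orthogonality relation $\sum_{i=1}^{n} p_{ik}p_{il}=\delta_{kl}$. For the cross term, expanding each $\overline{x}_{i}$ and pulling the scalar factors $p_{ik}, p_{il}$ outside both the multiplication and the fractional Laplacian gives
\[
\sum_{i=1}^{n}\overline{x}_{i}\,(-\Delta)^{s}|_{\Omega}(\overline{x}_{i}u_{1})
= \sum_{k,l=1}^{n}\Bigl(\sum_{i=1}^{n} p_{ik}p_{il}\Bigr) x_{k}(-\Delta)^{s}|_{\Omega}(x_{l}u_{1})
= \sum_{k=1}^{n} x_{k}(-\Delta)^{s}|_{\Omega}(x_{k}u_{1}).
\]
For the two purely quadratic pieces one uses the even simpler consequence $\sum_{i}\overline{x}_{i}^{\,2}=\sum_{k} x_{k}^{2}$; linearity of $(-\Delta)^{s}|_{\Omega}$ then immediately gives $\sum_{i}\overline{x}_{i}^{\,2}(-\Delta)^{s}|_{\Omega}u_{1}=\sum_{i} x_{i}^{2}(-\Delta)^{s}|_{\Omega}u_{1}$ and $\sum_{i}(-\Delta)^{s}|_{\Omega}(\overline{x}_{i}^{\,2}u_{1})=\sum_{i}(-\Delta)^{s}|_{\Omega}(x_{i}^{2}u_{1})$. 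Combining the three identities with the coefficients $2,-1,-1$ and the prefactor $1/2$ from \eqref{Theta-2} produces the claimed pointwise equality.

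The one mildly subtle point, and the only place where I expect to have to be careful, is that $(-\Delta)^{s}|_{\Omega}$ is a \emph{nonlocal} pseudo\-differential operator, so one cannot blithely commute it with multiplication by functions. The invariance argument goes through only because the orthogonal matrix $P$ acts on the polynomial coefficients $x_{k}$ (which can be pulled outside $(-\Delta)^{s}|_{\Omega}$ as constants \emph{inside each term of the double sum} by linearity) and not on the eigenfunction $u_{1}$ itself. Once this is noted, what remains is routine bookkeeping: apply the three identities, collapse the inner double sum via orthogonality of $P$, integrate against $u_{1}$, and read off the right-hand side $s(n+2s-2)\lambda_{1}^{(s-1)/s}$ from Lemma \ref{lem3.1}.
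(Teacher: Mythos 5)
Your proposal is correct and follows essentially the same route as the paper: you establish the pointwise invariance of each of the three summands of $\sum_{i}\Theta_{2}(\overline{x}_{i})$ under the orthogonal change of coordinates, using linearity of $(-\Delta)^{s}|_{\Omega}$ to pull out the constant coefficients $p_{ik}$ and then collapsing the double sum by $\sum_{i}p_{ik}p_{il}=\delta_{kl}$. The paper does precisely this (its equations \eqref{le-ineq-1}--\eqref{le-ineq-3}), merely unfolding the Fourier-transform definition of $(-\Delta)^{s}|_{\Omega}$ to make the linearity explicit before integrating against $u_{1}$ and invoking Lemma \ref{lem3.1}.
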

$$\eqno\Box$$
Next, applying Lemma \ref{lem2.1} and Lemma \ref{lem3.2}, we shall give the proof of Theorem \ref{thm1.1}.
\vskip 3mm
\noindent \emph{Proof of Theorem} \ref{thm1.1}.
Assume that $\{x_{1}, x_{2}, \cdots , x_{n}\}$ is a coordinate system given in Lemma \ref{lem3.1}   on $\mathbb{R}^{n}$. After a rotation transformation for the coordinate system  $\{x_{1}, x_{2}, \cdots , x_{n}\}$, we  can construct a new coordinate system $\{\overline{x}_{1}, \overline{x}_{2}, \cdots , \overline{x}_{n}\}$  satisfying equation \eqref{coor-func}.
Furthermore, in \eqref{lem-ineq}, we take $h_{i}=\overline{x}_{i}$, where $i=1,2,\cdots,n$. Then, under the new coordinate system $\{\overline{x}_{1}, \overline{x}_{2}, \cdots , \overline{x}_{n}\}$,  we have

\begin{equation}\label{lem-ineq-2}\sum^{n}_{i=1}(\lambda_{i+1}-\lambda_{1})^{\frac{1}{2}}\int_{\Omega}\Theta_{1}(\overline{x}_{i})u_{1}dv
\leq2\left\{\sum^{n}_{i=1}\int_{\Omega}\Theta_{2}(\overline{x}_{i})u_{1}dv\sum^{n}_{i=1}\int_{\Omega}\Psi_{i} dv\right\}^{\frac{1}{2}},\end{equation}and
$\int_{\Omega}\overline{x}_{i}u_{1}u_{j+1}dv=0, \ \ {\rm for}~ {\rm all} \ \ 0\leq j<i\leq n,
$ which happens to satisfy condition \eqref{ortho} in Lemma \ref{lem2.1}. From \eqref{Theta-1}, we have

\begin{equation*}\begin{aligned}\int_{\Omega}\Theta_{1}(\overline{x}_{i})u_{1}dv&=\int_{\Omega} \left[\frac{\partial}{\partial \overline{x}_{i}}(\overline{x}_{i}u_{1})-\overline{x}_{i}\frac{\partial}{\partial \overline{x}_{i}}u_{1} \right]u_{1}dv\\&=\int_{\Omega} \left[u_{1}\frac{\partial}{\partial \overline{x}_{i}}(\overline{x}_{i})+\overline{x}_{i}\frac{\partial}{\partial \overline{x}_{i}}u_{1}-\overline{x}_{i}\frac{\partial}{\partial \overline{x}_{i}}u_{1}\right]u_{1}dv\\&=\int_{\Omega}u_{1}^{2}dv=1,\end{aligned}\end{equation*}which implies that

\begin{equation}\label{Theta-1-estimate}\sum^{n}_{i=1}(\lambda_{i+1}-\lambda_{1})^{\frac{1}{2}}\int_{\Omega}\Theta_{1}(\overline{x}_{i})u_{1}dv
=\sum^{n}_{i=1}(\lambda_{i+1}-\lambda_{1})^{\frac{1}{2}}.\end{equation}
Moreover, we suppose $s>0$ with $s \in \mathbb{Q}_{+},$ and denote $s=\frac{a_{2}}{a_{1}}$ as before, where $a_{1}, a_{2}$ are two positive integers. Furthermore, we assume that the operator $\mathcal{L}$ satisfies that $\mathcal{L}=-\Delta.$ Then, it is clear that the conditions of Proposition \ref{prop3.1} are obviously satisfied by the above choice of the operator $\mathcal{L}$.   Therefore,  for all $0 \leq r \leq q \leq l$, inequality \eqref{Lruu} is true without parity condition on $q .$ Applying inequality \eqref{Lruu} with $r=1$ and $q=a_{2},$ it is not difficult to give an upper bound for the function   $\Psi_{i}$ as follows:

\begin{equation}
\begin{aligned}\label{sum-Tu}\sum^{n}_{i=1}\int_{\Omega}\Psi_{i} dv&=
\int_{\Omega}u_{1}(-\Delta) u_{1}dv \\
& \leq\left\{\int_{\Omega}u_{1}(-\Delta)^{a_{2}} u_{1}
dv\right\}^{\frac{1}{a_{2}}}\left\{\int_{\Omega} u_{1}^{2} dv\right\}^{1-\frac{1}{a_{2}}}\\&=\left\{\int_{\Omega}u_{1}\left((-\Delta)^{\frac{a_{2}}{a_{1}}}\right)^{a_{1}} u_{1} dv\right\}^{\frac{1}{a_{2}}} \\
&=\left\{\int_{\Omega}u_{1}\Big(\left(-\Delta\right)^{s}\Big)^{a_{1}} u_{1}  dv\right\}^{\frac{1}{a_{2}}}\\&=\lambda_{1}^{\frac{a_{1}}{a_{2}}}=\lambda_{1}^{\frac{1}{s}}.
\end{aligned}
\end{equation}
Substituting \eqref{Dalta-uu-1}, \eqref{Theta-1-estimate} and \eqref{sum-Tu} into \eqref{lem-ineq-2}, we get inequality \eqref{thm1.1-ineq}. Therefore, we finish the proof of Theorem \ref{thm1.1}.
$$\eqno\Box$$

\begin{ack} The author will express his sincere gratitude to the anonymous referees for their helpful comments and suggestions. Many thanks to professor Huyuan Chen for helpful discussion and drawing my attention to his article {\rm \cite{CV}}.\end{ack}

\end{document}